\newcommand{\dotdiv}{\ensuremath{{\ }\mathaccent\cdot{\text{\textendash}}}\ }
\newcommand{\ms}{\scriptscriptstyle} %math small
\newtheorem{theo}{Theorem}[section]   
\newtheorem{defi}[theo]{Definition}
\newtheorem{propo}[theo]{Proposition}
\newtheorem{coro}[theo]{Corollary}
\newtheorem{remark}[theo]{Remark}
\newenvironment{proof} {{\textbf{Proof.}}} {\ }
\begin{document}

\noindent{\Large{\textbf{About a `concrete' Rauszer Boolean algebra\\
 generated by a preorder}}

\

\noindent \small{LUISA ITURRIOZ,
\textit{Universit\'e de Lyon, Universit\'e Claude Bernard Lyon 1, \\
Institut Camille Jordan, CNRS UMR 5208,
F-69622 Villeurbanne cedex, France.} \\
\noindent E-mail: luisa.iturrioz@math.univ-lyon1.fr}

\

\noindent \textbf{Abstract}

\noindent  Inspired by the fundamental results obtained by P. Halmos and A. Monteiro, concerning equivalence relations and monadic Boolean algebras, we recall the `concrete' Rauszer Boolean algebra pointed out by C. Rauszer (1971), via un preorder $R$. On this algebra we can consider one of the several binary operations defined, in an abstract way, by A. Monteiro (1971).

The Heyting-Brouwer subalgebra of constants (fixpoints), allows us to give a general framework to find representations of several special algebraic structures related to logic.

\

\noindent \textbf{Keywords:}                                                                                                                                                                  

\noindent{\small Rauszer Boolean algebras, monadic Boolean algebras, preorders, $H$-$B$-algebras, representation theorem, three-valued $\L$ukasiewicz algebras, symmetrical Heyting algebras, Nelson algebras, deductive systems, information systems}

\medskip

%\section{Introduction}
\section{\hspace{-2.6ex}.{\hspace{1.ex}}Introduction}

In order to approach a set of objects --by excess and by default--, a very old idea is to consider the universe $Ob$ provided with a partition $P$. From an algebraic point of view, this partition generates an \textbf{equivalence relation} $R_{\ms P}$. 
Let $R^{\ast}_{\ms P}$ be the family of all equivalence classes $\vert x \vert$ of $R_{\ms P}$, i.e.\ $R^{\ast}_{\ms P} = \{\vert x \vert : x \in Ob\}$. 

On the Boolean algebra $({\cal P}(Ob), \cap, \cup, -, \emptyset, Ob)$, where ${\cal P}(Ob)$ denotes the powerset of $Ob$, and $\cap, \cup, - , \emptyset, Ob$, are the Boolean operations, the equivalence relation $R_{\ms P}$ induces a \textbf{monadic closure operator} $C_{\ms P}$ and a \textbf{monadic interior operator} $I_{\ms P}$ in the following way, for $A \subseteq Ob$: 
\[
C_{\ms P}A = \bigcup \{\vert x \vert \in R^{\ast}_{\ms P} : x \in A\} ;
\]
\[
I_{\ms P}A = \bigcup\{\vert x \vert \in R^{\ast}_{\ms P} : \vert x \vert \subseteq A\}.
\]

Since $I_{\ms P}A \subseteq A \subseteq C_{\ms P}A$, each subset $A$ of $Ob$ can be approached --by excess and by default-- by the sets $C_{\ms P}A$ and $I_{\ms P}A$. 

Since $R_{\ms P}$ is an equivalence relation, $C_{\ms P}A$ can also be defined by:

\[
C_{\ms P}A = \bigcup \{\vert x \vert \in R^{\ast}_{\ms P} : \vert x \vert \cap A \not = \emptyset \}.
\]

Looking for a general context to manage this type of `concrete' examples we are led to recall very known results \cite{Halmos62}, \cite{Mont60}. 

\

The notion of \textbf{monadic Boolean algebra} was introduced by Halmos \cite{Halmos55} in order to give a systematic algebraic study of the one-variable fragment of the first-order predicate logic \cite{Wajs77}. They are Boolean algebras with, in addition, a unary operator characterized by axioms analogous to those of an \textbf{existential quantifier} $\exists$ or an \textbf{universal quantifier} $\forall$. 

Recall that \cite{Halmos54}, \cite{Halmos62}, \cite{Halmos00} an \textbf{existential quantifier $\exists$} (or a \textbf{monadic operator} $C$, or an S5 operator \cite{Dav54} or a $\nabla$ saturation operator \cite{Mont60}) on a Boolean algebra $(B, \wedge, \vee, -, 0,1)$ is a mapping $\exists : B \rightarrow B$ satisfying the following conditions:

\begin{itemize}
\item[]($\exists0)$ $\exists 0 = 0$
\item[]($\exists1)$ $a \wedge \exists a = a$
\item[]($\exists2)$ $\exists(a \wedge \exists b) = \exists a \wedge \exists b$.
\end{itemize}

The abstract system ${\cal B} = (B, \exists)$ is called a \textbf{monadic Boolean algebra}. For equivalent definitions see \cite{Dav54}, \cite{Rub56}. As usual, the \textbf {universal quantifier} $\forall$ is defined by $\forall x = - \exists -x$.

In  \cite{Halmos55}, \cite{Dav54}, it was shown that the image $\exists(B)$ (i.e.\ the range of the quantifier $\exists$), is a monadic Boolean subalgebra of ${B}$. In addition, $x \in \exists(B)$ if and only if $\exists x = x$, if and only if $\forall x = x$. 
An element $x$ such that $\exists x = x$ (resp. $\forall x = x)$ is called closed (resp. open), constant or a fixpoint, and the set of closed elements is the same as the set of open elements (see for example (\cite{Rub56}, p.31)). In other words, a quantifier $\exists$ is a closure operator on B, for which every open element is closed.

\medskip

Every monadic subalgebra of the `concrete' pair $({\cal P}(Ob), C_{\ms P})$ is called an \textbf{equivalence algebra}. This example of monadic Boolean algebra is typical because a representation theorem (Halmos-Monteiro \cite{Mont60}, \cite{Halmos57}, \cite{Halmos59}) relates the abstract case to this `concrete' structure, as it is recalled in \cite{Itu09}. For the sake of clarity we recall that the operator $C_{\ms P}$(resp.\ $I_{\ms P}$) on ${\cal P}(Ob)$ is complete additive (\cite{Dav54}, p.749), (\cite{Rub56}, p.32), (resp.\ complete multiplicative) which is not necessarily true in a closure algebra.

\medskip

Monadic Boolean algebras appear naturally in several fields \cite{Halmos59}, \cite{Dav54}. In the 1980's, monadic Boolean algebras arose --also naturally-- in the domain of Computer Science because, as Ch.\ Davis wrote, `they do provide generalizations of the simpler and ``more set-theoretical" notion of equivalence relation (\cite{Dav54}, p.748). Let us illustrate this fact by the following example.

% Definition 1.1
\begin{defi} An \textbf {information system} in the sense of Pawlak \cite{Paw91} is a system 
\[(Ob, Att, \{Val_{a} : a \in Att\}, f)\]
where $Ob$ is a nonempty (finite) set called the universe of objects, $Att$ is a nonempty finite set of attributes, each $Val_{a}$ is a nonempty set of values of attribute $a$, and $f$ is a
function $f : Ob \times Att \rightarrow Val$, where $Val = \bigcup_{a \in Att} Val_{a}$.
In this way, for every $x \in Ob$ and $a \in Att$ we have that $f(x,a)= a(x) \in Val_{a}$.\\

An \textbf {equivalence relation} $R_{\ms P}$ on $Ob$, called the \textbf {indiscernibility relation}, can be defined in the following way:

for $x, y \in Ob,\ \ x R_{\ms P} y$ if and only if $f(x,a) = f(y,a)$, for every $a \in Att$.

\noindent The system $(Ob, R_{\ms P})$ is called an \textbf {approximation space.}
\end{defi}
%end Definition 1.1

By the construction indicated above, it follows that this equivalence relation $R_{\ms P}$ generates a monadic operator $C_{\ms P}$ and its \textbf{dual} $I_{\ms P}X = \\
- C_{\ms P}-X$ on the Boolean algebra $({\cal P}(Ob), \cap, \cup, -, \emptyset, Ob)$. Thus, the `concrete' structure $({\cal P}(Ob), C_{\ms P})$ is an equivalence algebra.

In this particular context \cite{Paw91}, the sets $I_{\ms P}X$ and $C_{\ms P}X$ are respectively called the \textbf {lower approximation} and the \textbf {upper approximation} of $X$, and a set $X \subseteq Ob$ is called  \textbf{$R_{\ms P}$-definable} if $C_{\ms P}X = I_{\ms P}X$, i.e.\ a constant of the monadic Boolean algebra $({\cal P}(Ob), C_{\ms P})$. Otherwise, $X$ is called a \textbf {rough set}. In the literature, a \textbf {rough set} can also be defined as a pair $[I_{\ms P}X, C_{\ms P}X]$, where $X \subseteq Ob$. 

For a strong relation between rough sets and three-valued {\L}ukasiewiez algebras see \cite{Itu99}. Rough sets, which are pairs of particular Boolean elements as it was exhibited above, provide a general framework to represent three-valued structures \cite{Itu01}, \cite{Itu07}.

\

Moreover, Halmos \cite{Halmos55} proved that any abstract monadic Boolean algebra\\ 
$(A, \wedge, \vee, -, 0, 1, C)$ is semisimple, i.e.\ the intersection of all monadic maximal filters is $\{1\}$. Interested in a Halmos's remark about semisimplicity, Monteiro (\cite{Mont71}, p.419), in an outstanding paper, considered the problem of the semisimplicity in abstract topological Boolean algebras. He established that the monadic Boolean algebras are exactly the topological Boolean algebras which are semisimple.

In the same paper, this author developed the notion of deductive semisimplicity in abstract topological Boolean algebras, in the sense of Tarski's elegant theory of deductive systems. With this purpose in mind, he showed properties of five binary (implication) operations defined on those structures \textbf{in an abstract way}.

\

Interested in logic or applied developments, some authors have replaced the equivalence relation by a preorder. It is the case of C. Rauszer (see \cite{Raus71}, \cite{Raus74}, \cite{Raus77}). 

In the example above a \textbf {preorder} $R$ on $Ob$, called the \textbf {informational inclusion}, can be defined in the following way:

for $x, y \in Ob,\ \ x R y$ if and only if $f(x,a) \subseteq f(y,a)$, for every $a \in Att$.

\

The aim of this paper is to extend the study of lower and upper approximations by means of \textbf{preorders} and to exhibit some properties of the Rauszer Boolean algebra $({\cal P}(Ob), \cap, \cup, -, \emptyset, Ob, I_{\ms R}, C_{\ms R})$ generated by a `concrete' preorder $R$ on the universe $Ob$. Some routine proofs are included for the sake of completeness.

%\section{2. A `concrete' Rauszer Boolean algebras }
\section{\hspace{-2.6ex}.{\hspace{1.ex}}{A `concrete' Rauszer Boolean algebra }}

\textbf{Rauszer Boolean algebras} are Boolean algebra with, in addition, two particular unary operators, $I$ an interior and $C$ a closure. They were introduced and studied by Rauszer \cite{Raus71} under the name of bi-topological algebras. 
In this section we point out some basic notions related to a `concrete' Rauszer Boolean algebra.

Preorders, which are reflexive and transitive relations, are also named quasiorders or $S4$ relations. It is well known that, each such preorder is associated --in a natural way-- with an equivalence relation (i.e.\ a reflexive, symmetric and transitive relation) and also with orders.

\

Let $Ob$ be a nonempty set (set of objects) and $R$ a preorder relation on $Ob$. For $x \in Ob$, let 
\[
R(x) = \{y \in Ob :  x R y\}
\]
and $R^{\ast} = \{R(x) : x \in Ob\}$.

By the reflexivity of $R$ we infer that $x \in R(x)$. Also, if $z \in R(x)$ and $u \in R(z)$ then $x R z$ and $z R u$, so by transitivity  $x R u$, i.e.\ $u \in R(x)$. Thus $x R z \Leftrightarrow z \in R(x) \Leftrightarrow R(z) \subseteq R(x)$.

\

The converse of $R$, denoted by $S$ is defined by: 
\[
S(x) =  \{y \in Ob :  y R x\}.
\]

As suggested by the above readings, we see that on the Boolean algebra $({\cal P}(Ob), \cap, \cup, -, \emptyset, Ob)$, a preorder relation $R$ induces a unary operator $C_{\ms R}$ in the following way, for $X \subseteq Ob$: 
\[
C_{\ms R}X = \bigcup \{R(x) \in R^{\ast} : x \in X\}.
\]

%proposition 2.1
\begin{propo}\label{axiomsC}
On the Boolean algebra $({\cal P}(Ob), \cap, \cup, -, \emptyset, Ob)$, the mapping $C_{\ms R}: {\cal P}(Ob) \rightarrow {\cal P}(Ob)$ satisfies the following conditions: 

\begin{itemize}
\item[](C1) $C_{\ms R} \emptyset = \emptyset$
\item[](C2) $X \subseteq C_{\ms R}X$
\item[](C3) $C_{\ms R}(X \cup Y) = C_{\ms R}X \cup C_{\ms R}Y$
\item[](C4) $C_{\ms R}(C_{\ms R}X) = C_{\ms R}X$
\end{itemize}
\end{propo}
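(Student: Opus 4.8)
The plan is to verify each of the four conditions directly from the definition $C_{\ms R}X = \bigcup\{R(x) \in R^{\ast} : x \in X\}$, using the basic properties of the preorder $R$ recorded just before the statement, namely reflexivity ($x \in R(x)$) and the equivalence $xRz \Leftrightarrow z \in R(x) \Leftrightarrow R(z) \subseteq R(x)$.

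First, for (C1): if $X = \emptyset$ then the index set of the union is empty, so $C_{\ms R}\emptyset = \emptyset$. Second, for (C2): given $x \in X$, reflexivity gives $x \in R(x) \subseteq C_{\ms R}X$, hence $X \subseteq C_{\ms R}X$. Third, for (C3): this is essentially the distributivity of an arbitrary union over the splitting of the index set $X \cup Y$ into $X$ and $Y$; one checks $C_{\ms R}(X \cup Y) = \bigcup\{R(x) : x \in X \cup Y\} = \bigcup\{R(x): x \in X\} \cup \bigcup\{R(x): x\in Y\} = C_{\ms R}X \cup C_{\ms R}Y$. No properties of $R$ beyond the set-theoretic bookkeeping are needed here.

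The only step with any content is (C4). By (C2) applied to $C_{\ms R}X$ we already have $C_{\ms R}X \subseteq C_{\ms R}(C_{\ms R}X)$, so it remains to prove $C_{\ms R}(C_{\ms R}X) \subseteq C_{\ms R}X$. Take $u \in C_{\ms R}(C_{\ms R}X)$; then $u \in R(z)$ for some $z \in C_{\ms R}X$, and in turn $z \in R(x)$ for some $x \in X$. From $z \in R(x)$ we get $R(z) \subseteq R(x)$ (the transitivity-derived inclusion), so $u \in R(z) \subseteq R(x) \subseteq C_{\ms R}X$. This is where transitivity of $R$ is genuinely used; everything else is formal. I would present (C1)--(C3) in one or two lines each and spend the bulk of the write-up on the inclusion $C_{\ms R}(C_{\ms R}X) \subseteq C_{\ms R}X$, since that is the only place a reader could stumble.

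Overall I expect no serious obstacle: the proposition is a routine verification, and the ``hard part'' is merely making sure the chain $u \in R(z) \subseteq R(x)$ is justified by invoking $z \in R(x) \Rightarrow R(z) \subseteq R(x)$ rather than re-deriving it. It may also be worth remarking in passing that, unlike in the equivalence-relation case, one does not in general have the alternative description $C_{\ms R}X = \bigcup\{R(x) : R(x) \cap X \neq \emptyset\}$, because $R$ need not be symmetric; this clarifies why the proof of (C4) must go through the inclusion of cones rather than through a non-emptiness-of-intersection argument.
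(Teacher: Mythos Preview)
Your proposal is correct and follows essentially the same route as the paper: (C1) and (C2) are handled identically, and (C4) is proved via the same chain $u \in R(z) \subseteq R(x)$ using transitivity, with (C2) supplying the reverse inclusion. The only cosmetic difference is (C3): the paper first records that $C_{\ms R}$ is increasing and uses this for one inclusion, whereas you obtain both inclusions at once by splitting the index set of the union; both arguments are standard and equally short.
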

\begin{proof}
In fact, $\textit{(C1)}$ $C_{\ms R}\emptyset = \bigcup \{R(x) \in R^{\ast} : x \in \emptyset\} = \emptyset$. 
Also, $\textit{(C2)}$ $X \subseteq C_{\ms R}X$ since the relation is reflexive. 

Now we prove that $C_{\ms R}$ is increasing, that is if $X \subseteq Y$, then $C_{\ms R}X \subseteq C_{\ms R}Y$. Let $z \in C_{\ms R}X$ then there is $u \in X \subseteq Y$ such that $z \in R(u)$, thus $z \in C_{\ms R}Y$.

Since $X \subseteq X \cup Y$ and $Y \subseteq X \cup Y$ we have $(i)$ $C_{\ms R}X \cup C_{\ms R}Y \subseteq C_{\ms R}(X \cup Y)$.
On the other hand, let $z \in C_{\ms R}(X \cup Y)$ then there is $u \in X \cup Y$ such that $z \in R(u)$. If $u \in X$ then $z \in C_{\ms R}X$, and if $u \in Y$ then $z \in C_{\ms R}Y$,  i.e.\ $(ii)$ $ z \in C_{\ms R}X \cup C_{\ms R}Y$. From $(i)$ and $(ii)$ we get $\textit{(C3)}$.

From $\textit{(C2)}$ we have $(i)$ $C_{\ms R}X \subseteq C_{\ms R}(C_{\ms R}X)$. On the other hand, let $z \in C_{\ms R}(C_{\ms R}X)$ then there is $(a)$ $u \in C_{\ms R}X$ such that $z \in R(u)$. From $(a)$ there is $(b)$ $v \in X$ such that $u \in R(v)$. These facts mean that $u R z$ and $v R u$. Since $R$ is transitive $v R z$. Hence $z \in R(v)$ and by $(b)$ we get $z \in \bigcup \{R(v) : v \in X\} = C_{\ms R}X$.

The proof is now complete.
\end{proof}
%end proposition 2.1

\

From conditions $\textit{(C1)-(C4)}$ we deduce that $C_{\ms R}$ is a \textbf{closure operator}, or a $S4$ operator on the Boolean algebra ${\cal P}(Ob)$. 

\

In ${\cal P}(Ob)$, we can define the operator $I_{\ms R}$, via the preorder $R$, in the following way:
\[
I_{\ms R}X =  \bigcup \{R(x) \in R^{\ast}  \ \ \text{such that} \ \ R(x) \subseteq X\}
\]

%proposition 2.2
\begin{propo}\label{axiomsI}
On the Boolean algebra $({\cal P}(Ob), \cap, \cup, -, \emptyset, Ob)$, the mapping $I_{\ms R}: {\cal P}(Ob) \rightarrow {\cal P}(Ob)$ satisfies the following conditions: 

\begin{itemize}
\item[](I1) $I_{\ms R} Ob = Ob$
\item[](I2) $I_{\ms R}X \subseteq X$
\item[](I3) $I_{\ms R}(X \cap Y) = I_{\ms R}X \cap I_{\ms R}Y$
\item[](I4) $I_{\ms R}(I_{\ms R}X) = I_{\ms R}X$
\end{itemize}
\end{propo}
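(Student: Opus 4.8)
The plan is to prove the four properties of $I_{\ms R}$ by exploiting the duality $I_{\ms R}X = -\,C_{\ms R}(-X)$ together with the already-established Proposition~\ref{axiomsC}, rather than arguing directly from the definition of $I_{\ms R}$. So the first step is to verify that $I_{\ms R}X = -\,C_{\ms R}(-X)$ for every $X \subseteq Ob$. Unwinding the right-hand side: $y \notin C_{\ms R}(-X)$ means there is no $u \notin X$ with $y \in R(u)$; equivalently, every $u$ with $u\,R\,y$ lies in $X$, i.e.\ $S(y) \subseteq X$. So $-\,C_{\ms R}(-X) = \{y : S(y) \subseteq X\}$, and one checks this equals $\bigcup\{R(x) : R(x) \subseteq X\}$: if $R(x)\subseteq X$ and $y\in R(x)$ then $S(y)\subseteq R(x)\subseteq X$ by the characterization $x\,R\,y \Leftrightarrow R(y)\subseteq R(x)$ applied to the converse; conversely if $S(y)\subseteq X$ then, since $y \in R(y) \subseteq$ (we need $R(y)\subseteq X$)—here one uses that $z\in R(y)$ implies $y \in S(z)$, so this direction needs a small argument showing $R(y)\subseteq X$ whenever $S(y')\subseteq X$ for all relevant $y'$; cleaner is simply to show both sets equal $\{y : S(y)\subseteq X\}$ by a direct inclusion chase using reflexivity and transitivity. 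Once the identity $I_{\ms R} = -\,C_{\ms R}-$ is in hand, everything else is formal.

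Given the duality, the remaining steps are routine dualizations. For (I1): $I_{\ms R}Ob = -\,C_{\ms R}(-Ob) = -\,C_{\ms R}\emptyset = -\emptyset = Ob$ by (C1). For (I2): from (C2), $-X \subseteq C_{\ms R}(-X)$, so complementing reverses the inclusion to give $-\,C_{\ms R}(-X) \subseteq X$, i.e.\ $I_{\ms R}X \subseteq X$. For (I3): $I_{\ms R}(X\cap Y) = -\,C_{\ms R}(-(X\cap Y)) = -\,C_{\ms R}((-X)\cup(-Y)) = -\,(C_{\ms R}(-X)\cup C_{\ms R}(-Y))$ by (C3), and by De Morgan this is $(-\,C_{\ms R}(-X)) \cap (-\,C_{\ms R}(-Y)) = I_{\ms R}X \cap I_{\ms R}Y$. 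For (I4): $I_{\ms R}(I_{\ms R}X) = -\,C_{\ms R}(-(-\,C_{\ms R}(-X))) = -\,C_{\ms R}(C_{\ms R}(-X)) = -\,C_{\ms R}(-X) = I_{\ms R}X$ by (C4).

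The main obstacle is entirely in the first step: establishing that $I_{\ms R}$ as defined (the union of all $R$-classes contained in $X$) genuinely coincides with $-\,C_{\ms R}(-X)$. This requires care because the definition of $C_{\ms R}$ involves $R$ while the dual naturally brings in the converse relation $S$, and one must use both the reflexivity of $R$ and the chain equivalence $x\,R\,z \Leftrightarrow z \in R(x) \Leftrightarrow R(z) \subseteq R(x)$ established earlier. If one prefers to avoid the duality altogether, an alternative is a direct verification: (I1) and (I2) are immediate from the definition (every $R(x)\subseteq Ob$; each $R(x)$ in the union is a subset of $X$); (I3) follows because $R(x)\subseteq X\cap Y \Leftrightarrow R(x)\subseteq X$ and $R(x)\subseteq Y$, though the nontrivial inclusion $I_{\ms R}X \cap I_{\ms R}Y \subseteq I_{\ms R}(X\cap Y)$ needs the observation that for $z \in R(x)\cap R(x')$ with $R(x)\subseteq X$, $R(x')\subseteq Y$, one has $R(z)\subseteq R(x)\cap R(x')\subseteq X\cap Y$ by transitivity, so $z\in I_{\ms R}(X\cap Y)$; and (I4) follows since $R(x)\subseteq I_{\ms R}X$ whenever $R(x)\subseteq X$ (again because $z\in R(x)$ gives $R(z)\subseteq R(x)\subseteq X$). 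I would present the duality proof as the primary argument since it is shortest and reuses Proposition~\ref{axiomsC} cleanly.
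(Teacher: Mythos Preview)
Your primary approach rests on the identity $I_{\ms R}X = -\,C_{\ms R}(-X)$, and this identity is \emph{false} when $R$ is not symmetric. Take $Ob=\{a,b\}$ with $aRa$, $aRb$, $bRb$ (a two-element chain). Then $R(a)=\{a,b\}$, $R(b)=\{b\}$. For $X=\{a\}$ no $R(x)$ is contained in $X$, so $I_{\ms R}X=\emptyset$; but $-X=\{b\}$, $C_{\ms R}(-X)=R(b)=\{b\}$, hence $-\,C_{\ms R}(-X)=\{a\}\neq\emptyset$. Your own computation already shows why: you correctly get $-\,C_{\ms R}(-X)=\{y:S(y)\subseteq X\}$, and this set is $I_{\ms S}X$, not $I_{\ms R}X$. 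The step where you write ``if $R(x)\subseteq X$ and $y\in R(x)$ then $S(y)\subseteq R(x)$'' is exactly where the argument breaks: from $xRy$ and $uRy$ one cannot conclude $xRu$. The paper itself records the correct duality later, namely $C_{\ms R}X=-I_{\ms S}(-X)$ and $C_{\ms S}X=-I_{\ms R}(-X)$, always pairing an operator for $R$ with one for the converse $S$. So the short dualization route you propose would prove the interior axioms for $I_{\ms S}$, not for $I_{\ms R}$.

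Your alternative direct verification, on the other hand, is correct and is essentially what the paper does. The paper argues (I1) and (I2) from reflexivity, proves monotonicity, gets one inclusion of (I3) from monotonicity and the other by observing that if $z\in R(u)\subseteq X$ and $z\in R(v)\subseteq Y$ then $z\in R(z)\subseteq R(u)\cap R(v)\subseteq X\cap Y$ (your transitivity argument, phrased with $R(z)$ instead of two separate witnesses), and handles (I4) by noting that $R(u)\subseteq X$ already forces $R(u)\subseteq I_{\ms R}X$. So drop the duality and present your ``alternative'' as the actual proof; it matches the paper's argument.
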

\begin{proof}
In fact, $\textit{(I1)}$ $I_{\ms R}Ob = \bigcup \{R(x) \in R^{\ast} : x \in R(x) \subseteq Ob\} = Ob$. 
Also, $\textit{(I2)}$ $I_{\ms R}X \subseteq X$ since the relation is reflexive. 

Now we prove that $I_{\ms R}$ is increasing, that is if $X \subseteq Y$, then $I_{\ms R}X \subseteq I_{\ms R}Y$. Let $z \in I_{\ms R}X$, then there is $u \in Ob$ such that $z \in R(u) \subseteq X \subseteq Y$, thus $z \in I_{\ms R}Y$.

Since $X \cap Y \subseteq X$ and $X \cap Y \subseteq Y$ we have $(i)$ $I_{\ms R}(X \cap Y) \subseteq I_{\ms R}X \cap I_{\ms R}Y$.
On the other hand, let $z \in I_{\ms R}X \cap I_{\ms R}Y$, then there are $u, v \in Ob$ such that $z \in R(u) \subseteq X$ and $z \in R(v) \subseteq Y$. Since $R$ is a preorder, $z \in R(z) \subseteq R(u) \subseteq X$ and $z \in R(z) \subseteq R(v) \subseteq Y$. Hence $z \in R(z) \subseteq X \cap Y$, that is $z \in I_{\ms R}(X \cap Y)$. From $(i)$ and $(ii)$ we get $\textit{(I3)}$.

From $\textit{(I2)}$ we have $(i)$ $I_{\ms R}(I_{\ms R}X) \subseteq I_{\ms R}X$. On the other hand, let $z \in I_{\ms R}X$ then there is $R(u) \in R^{\ast}$ such that $z \in R(u) \subseteq X$. Since $R(u) \subseteq X$ we have $R(u) \subseteq I_{\ms R}X$. Thus $z \in R(u) \subseteq I_{\ms R}X$, i.e.\ $z \in I_{\ms R}(I_{\ms R}X)$.

The proof is now complete.
\end{proof}
%end proposition 2.2

\

From conditions $\textit{(I1)-(I4)}$ we deduce that $I_{\ms R}$ is an \textbf{interior operator}, or a $S4$ operator on the Boolean algebra ${\cal P}(Ob)$. 

%Since $R$ is reflexive we infer that $I_{\ms R}X \subseteq X \subseteq C_{\ms R}X$.

%Also, the operator $I_{\ms R}$ satisfies conditions $\textit{(I1)-(I4)}$ dual of those $\textit{(C1)-C4)}$ above. This means that $I_{\ms R}$ is an \textbf{interior operator}.

\

The `concrete' Rauszer Boolean algebra ${\cal B}= ({\cal P}(Ob), I_{\ms R}, C_{\ms R})$ has many interesting algebraic properties. 

We take note of the fact that, in the particular case of monadic Boolean algebras, the operators $C_{\ms R}X$ and $I_{\ms R}X$ are related by means of the Boolean negation. But this is not the case here. 

However, we can prove that (\cite{Raus74}, p.228) 
\begin{center}
$I_{\ms R}C_{\ms R}X= C_{\ms R}X$ \quad and \quad  $C_{\ms R}I_{\ms R}X= I_{\ms R}X
$, 
\end{center}
that is, they are \textbf{conjugate} over ${\cal P}(Ob)$. 

\noindent In fact, by $\textit{(I2)}$ we have $I_{\ms R}C_{\ms R}X \subseteq C_{\ms R}X$. To prove the converse, let $z \in C_{\ms R}X$ then there is $u \in X$ such that $z \in R(u) \subseteq C_{\ms R}X$.
Since $R$ is a preorder, 

\noindent $z \in R(z) \subseteq R(u) \subseteq C_{\ms R}X$, that is 
$z \in I_{\ms R}C_{\ms R} X$. The proof of $C_{\ms R}I_{\ms R}X= I_{\ms R}X$ is similar.

\

A set $X \in {\cal P}(Ob)$ is called \textbf{$R$-closed} in the case $C_{\ms R}X = X$ and \textbf{$R$-open} in the case $I_{\ms R}X = X$. As in the case of monadic Boolean algebras we have here that 
\begin{center}
 $X$ is $R$-open \quad if and only if \quad $X$ is $R$-closed.
\end{center}
Indeed, if $X$ is $R$-open we have $I_{\ms R}X = X$. Hence $C_{\ms R}X = C_{\ms R}I_{\ms R}X = I_{\ms R}X = X$. The proof of the converse is analogous.

\

Using $S$, the converse of $R$, we can also consider the operation 

\noindent $I_{\ms S}X = \bigcup \{S(x) : S(x) \subseteq X\}$. In this case we have:
\[
C_{\ms R}X = -I_{\ms S}-X  \ \text{and} \ \ C_{\ms S}X = -I_{\ms R}-X
\]

In fact, let $z \in C_{\ms R}X$, so  there is $u \in X$ such that $z \in R(u)$, i.e.\ $(a)$ $u R z$. 
If $z \in I_{S}-X$, then there is $v \in Ob$ such that $z \in S(v) \subseteq -X$. In this case, $(b)$ $z R v$. 
From $(a)$ and $(b)$ we get $u R v$ by transitivity, so $u \in S(v) \subseteq -X$, a contradiction. 
%Thus $z \in -I_{S}-X$. 
This proves that $(i)$ $C_{\ms R}X \subseteq -I_{\ms S}-X$.
To prove the converse inclusion, let $z \in -I_{S}-X$. Then $z \not \in I_{S}-X =
\bigcup\{S(y) : S(y) \subseteq -X\}$. Hence there is a $x \in S(z) \cap X$, i.e.\ $x R z$ and $x \in X$. Therefore $z \in R(x)$ and $x \in X$. Thus $x \in C_{\ms R}X$. This shows $(ii)$ $-I_{\ms S}-X \subseteq C_{\ms R}X$

The proof of the other equality is similar.

\

Let ${\cal O}_{R}$ be the family of all \textbf{$R$-open} elements and $I_{\ms R}({\cal P}(Ob))$ the image of ${\cal P}(Ob)$ by $I_{\ms R}$. We have the following equivalences:

$Z \in I_{\ms R}({\cal P}(Ob))$ $\iff$ $\text{there is}\ X \in {\cal P}(Ob)\ \text{such that}\ I_{\ms R}X = Z$ $\iff$\\
 $\text{there is}\ X \in {\cal P}(Ob)\ \text{such that}\ I_{\ms R}Z = I_{\ms R}I_{\ms R}X = I_{\ms R}X = Z$ $\iff$ $Z \in {\cal O}_{R}$

\

Since $I_{\ms R}$ is an interior operator, then the image $I_{\ms R}({\cal P}(Ob)) = ({\cal O}_{R}, \cap, \cup, \emptyset, Ob)$ is a distributive lattice, with zero and unit.

In addition, ${\cal O}_{R}$ satisfies the following property (\cite{Mont42}, p.177):
\begin{itemize}
\item[] If for all $k \in K$, $X_k \in {\cal O}_{R}$ then the lower upper bound (l.u.b.) $\bigvee_{k  \in K} X_k = \bigcup_{k  \in K} X_k$ is in ${\cal O}_{R}$.
\end{itemize}

In other words, this means that $I_{\ms R}(\bigcup_{k \in K} X_k) = \bigcup_{k \in K} X_k$. Hence ${\cal O}_{R}$ is a \textbf{sup-complete lattice}. 

\

In the `concrete' Rauszer Boolean algebra ${\cal B} = ({\cal P}(Ob), I_{\ms R}, C_{\ms R})$, the lattice ${\cal O}_{R}$ has another property which is, in general, not true in topological spaces.

%proposition 2.3
\begin{propo}\label{}
If for all $k \in K$, $X_k \in {\cal O}_{R}$ then $I_{\ms R}(\bigcap_{k \in K} X_k) = \bigcap_{k \in K} X_k$.
\end{propo}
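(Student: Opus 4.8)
The plan is to prove the two inclusions separately. The inclusion $I_{\ms R}(\bigcap_{k \in K} X_k) \subseteq \bigcap_{k \in K} X_k$ is immediate from $\textit{(I2)}$, so all the content lies in the reverse inclusion $\bigcap_{k \in K} X_k \subseteq I_{\ms R}(\bigcap_{k \in K} X_k)$. The key structural fact I would isolate first is that, for this `concrete' operator, a set $X$ is $R$-open if and only if $R(x) \subseteq X$ for every $x \in X$; this follows from $I_{\ms R}X = X$ together with the equivalence $x R z \Leftrightarrow z \in R(x) \Leftrightarrow R(z) \subseteq R(x)$ recorded above, which exhibits the $R$-open sets as exactly the sets closed under $R$-successors (equivalently, the unions of the `basic' sets $R(x) \in R^{\ast}$).

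Granting this, the argument is short. Let $z \in \bigcap_{k \in K} X_k$. For each $k \in K$, $X_k$ is $R$-open, so $z \in X_k = I_{\ms R}X_k$ and hence there is $u_k \in Ob$ with $z \in R(u_k) \subseteq X_k$. From $z \in R(u_k)$, i.e.\ $u_k R z$, and the preorder properties we get $R(z) \subseteq R(u_k)$, so $R(z) \subseteq X_k$; as this holds for every $k$, $R(z) \subseteq \bigcap_{k \in K} X_k$. Since $z \in R(z)$ and $R(z) \in R^{\ast}$ with $R(z) \subseteq \bigcap_{k \in K} X_k$, the definition of $I_{\ms R}$ gives $z \in I_{\ms R}(\bigcap_{k \in K} X_k)$. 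Combined with $\textit{(I2)}$ this yields the desired equality.

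I do not anticipate a genuine obstacle: the only point requiring care is the step $z \in R(u) \Rightarrow R(z) \subseteq R(u)$, which is exactly transitivity of $R$, already used to establish Propositions~\ref{axiomsC} and~\ref{axiomsI}. It is worth remarking \emph{why} this works here while the analogous statement fails for general interior operators: the family ${\cal O}_{R}$ is stable under arbitrary intersections precisely because every $R$-open set is a union of the basic open sets $R(x)$, which form an Alexandrov-type base; equivalently, $I_{\ms R}$ is completely multiplicative, not merely finitely multiplicative as in $\textit{(I3)}$. One could also phrase the whole proof purely set-theoretically via the characterization lemma of the first paragraph (intersection of families closed under $R$-successors is again closed under $R$-successors), but the two-inclusion route above keeps everything internal to the operator $I_{\ms R}$.
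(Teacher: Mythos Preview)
Your proof is correct and follows essentially the same route as the paper: one inclusion from \textit{(I2)}, and for the other you pick $z \in \bigcap_{k} X_k$, use $X_k = I_{\ms R}X_k$ to obtain $u_k$ with $z \in R(u_k) \subseteq X_k$, then transitivity to get $R(z) \subseteq R(u_k) \subseteq X_k$ and hence $R(z) \subseteq \bigcap_k X_k$, which places $z$ in $I_{\ms R}(\bigcap_k X_k)$. The additional commentary on the Alexandrov-type base and the characterization of $R$-open sets as sets closed under $R$-successors is accurate and illuminating, but the core argument is the same as the paper's.
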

\begin{proof}
Indeed, $(i)$ $I_{\ms R}(\bigcap_{k \in K} X_k) \subseteq \bigcap_{k \in K} X_{k}$ by $\textit{(I2)}$. On the other hand, we will prove that $\bigcap_{k \in K} X_k \subseteq I_{\ms R}(\bigcap_{k \in K}X_k) = \bigcup \{R(u) : R(u) \subseteq \bigcap_{k \in K}X_k\}$.

Let $z \in \bigcap_{k \in K} X_k$. For all $k \in K$ we have $z \in X_k = I_{\ms R}X_k$, that is, for all $k \in K$ there is $u_{k} \in X_{k}$ such that $z \in R(z) \subseteq R(u_{k}) \subseteq X_{k}$. Hence $z \in R(z) \subseteq \bigcap_{k \in K} X_k$. This leads to $z \in I_{\ms R}(\bigcap_{k \in K} X_k)$, that is $(ii)$ $\bigcap_{k \in K} X_k \subseteq I_{\ms R}(\bigcap_{k \in K}X_k)$. 

From $(i)$ and $(ii)$ we get the result.
\end{proof}
%end proposition 2.3

Owing to the previous results, we infer that the ordered set $({\cal O}_{R}, \cap, \cup, \emptyset, Ob)$ is a \textbf{complete} lattice with zero and unit. The l.u.b.\ and g.l.b.\ being the intersection and union of sets respectively.

Let ${\cal C}_{R}$ be the family of all \textbf{$R$-closed} elements under $R$ and $C_{\ms R}(Ob)$ the image of $Ob$ by $C_{\ms R}$. By a result above, we conclude that ${\cal C}_{R} = {\cal O}_{R}$.

\ 

Moreover, based on semisimplicity motivations, A. Monteiro \cite{Mont71}, has studied properties of several binary operations in \textbf{abstract} topological Boolean algebras $(A, I)$, where $A$ is a Boolean algebra and $I$ is an interior operator on $A$. In particular, he dealt with an implication $\Rightarrow$ (\cite{Mont71}, p.432), (\cite{Mont80}, p.33), defined by:
\begin{align*}
a \Rightarrow b &= I(Ia \supset Ib)\\
\righthalfcap a &= a \Rightarrow 0.
\end{align*}
\noindent where $\supset$ is the classical implication $x \supset y = - x \cup y$.

\

In view of our construction, this operation on ${\cal O}_{R}$ is defined, for all $G, H \in {\cal O}_{R}$ by
\[
G \Rightarrow H = I_{\ms R}(G \supset H)
\]
On ${\cal O}_{R}$, the operation $\Rightarrow$ is the Heyting implication since it satisfies the following conditions, for all $G, H, X \in {\cal O}_{R}$:
\begin{itemize}
\item[]$(H1)$ $G \cap I_{\ms R}(G \supset H) \subseteq H$
\item[]$(H2)$ if $G \cap X \subseteq H$, then  $X \subseteq I_{\ms R}(G \supset H)$
\end{itemize}

By duality, the connective $\dotdiv$  can be expressed, for all $G, H \in {\cal O}_{R}$, as:
\[
G \dotdiv H = C_{\ms R}(G \cap -H)
\]
On ${\cal O}_{R}$, the operation $\dotdiv$ is the Brouwer implication (also called the pseudo-difference or the residual) since it satisfies the following conditions,\\
for all $G, H, X \in {\cal O}_{R}$ (\cite{War-Dil39}, p.337):
\begin{itemize}
\item[]$(B1)$ $G \subseteq H \cup C_{\ms R}(G \cap -H)$
\item[]$(B2)$ if $G \subseteq H \cup X$, then  $C_{\ms R}(G \cap -H)\subseteq X$
\end{itemize}

Therefore the system $({\cal O}_{R}, \cap, \cup, \Rightarrow, \dotdiv, \righthalfcap, \lefthalfcap, \emptyset, Ob)$ is a Heyting-Brouwer algebra. Here $\lefthalfcap G = Ob \dotdiv G$.

This type of algebras were remarked by McKinsey and Tarski in (\cite{McK-T46}, p.129) and referred to as ``double Brouwerian algebras".  According to these authors, this notion seems to have been discussed for the first time in a paper by Skolem in 1919 (implicative and subtractive lattices). In the 1970's, they were extensively investigated by Rauszer in several papers, under the name of semi-Boolean algebras \cite{Raus71}. We remark that, in the literature, this latter name has also been used for other structures. 
They are an algebraic counterpart of an extension of the intuitionistic logic that she called Heyting-Brouwer ($H$-$B$)-logic. For this reason, we preferred to call them \textbf{Heyting-Brouwer algebras} ($\textbf{H}$-$\textbf{B}$-algebras in brief). For more information see \cite{Itu76a}, \cite{Itu76b}. 

\

We close this section recalling the following structural results, that will be used in the sequel.

%definition 2.4
\begin{defi}\label{ded-alg}
A \textbf{deductive algebra} $(A, \rightarrowtail, 1)$ is an algebra of type $(2, 0)$ satisfying the following conditions (\cite{Mont80}, p.5):
\begin{itemize}
\item[](I1) $x \rightarrowtail (y \rightarrowtail x) = 1$
\item[](I2) $(x \rightarrowtail (y \rightarrowtail z)) \rightarrowtail ((x \rightarrowtail y) \rightarrowtail (x \rightarrowtail z)) = 1$
\item[](I3)  if $1 \rightarrowtail x = 1$, then $x = 1$
\end{itemize}
\end{defi}
%end definition 2.4

Incidentally, if $(A, \rightarrowtail, 1)$ is a deductive algebra, then $x \rightarrowtail 1 = 1$ (\cite{Mont80}, p.6) and $x \rightarrowtail x = 1$ (\cite{Mont80}, p.11).

%definition 2.5
\begin{defi}
A subset $D$ of a deductive algebra $(A, \rightarrowtail, 1)$ is said to be a \textbf{deductive system} if:
\begin{itemize}
\item[](D1) $1 \in D$
\item[](D2) if\ $a, a  \rightarrowtail b \in D$, then\ $b \in D$\ (modus ponens)
\end{itemize}
\end{defi}
%end definition 2.5

We note that the systems $({\cal P}(Ob), \supset, Ob)$ and $({\cal P}(Ob), \Rightarrow, Ob)$ are \textbf{deductive algebras} (\cite{Mont80}, p.33) (Deductive algebras are also called quasi-I-algebras by A. Horn (1962)).

If an algebraic system is a deductive algebra, then we can apply the fundamental results proved by A. Monteiro (\cite{Mont71}, pp.427-431) concerning the theory of deductive systems, founded and developed by Tarski. 

This means, for example, that the deduction theorem is satisfied. The deductive system generated by a set $Z \not \equal \emptyset$ is:
\[
D(Z) = \{x \in A : (z_{1} \rightarrowtail (z_{2} \rightarrowtail \ldots \rightarrowtail ( z_{n} \rightarrowtail x) \ldots )) = 1, \text{with}\ z_{1},  z_{2}, \ldots , z_{n} \in Z\}
\]

\noindent and the deductive system $D(D_{1}, a)$ generated by a deductive system $D_{1}$ and a fixed element $a \not \in D_{1}$ is:
\[
D(D_{1},a) = \{x \in A \quad \text{such that} \quad a \rightarrowtail x \in D_{1}\}
\]

%\section{3. Representation theorems in an unified form}
\section{\hspace{-2.6ex}.{\hspace{1.ex}}{Representation theorems in an unified form}}

Following the point of view expressed by McKinsey and Tarski (\cite{McK-T46}, p.130) in the domain of Heyting algebras, we are interested to show that the method of constructing $H$-$B$-algebras as above is the most general one, i.e.\ that every Heyting-Brouwer algebra can be embedded in a `concrete' Rauszer Boolean algebra enriched with an abstract binary operation, and more precisely that it can be represented as a subalgebra of the $R$-open (or $R$-closed) elements ${\cal O}_{R}$ of this algebra.

Indeed, we can envisage more, because the advantage of this construction is that it provides a general framework for representation of several known structures. 

\

Looking for representations of a distributive lattice by a field of objects of some sort satisfying the $T_0$ axiom of separability, it is known that (\cite{Bir-Fr48}, p.306) there is no loss of generality if we confine attention to sets of \textbf{prime filters} $Ob$, ordered by inclusion, i.e.\ $R$ is $\subseteq$. Thus if $P \in Ob$ then $R(P) = \{Q \in Ob : P \subseteq Q \}$.

\medskip

For the sake of clarity we recall that a subset $F$ of a lattice $(A, \wedge, \vee, 0, 1)$ is said to be a \textbf{filter} if the following conditions are satisfied:

\medskip

\noindent $(f1)$ $1 \in F$; \quad $(f2)$ if $a, b \in F$, then $a \wedge b \in F$; \quad $(f3)$ if $a \in F$ and $a \leq b$, then $b \in F$;

\medskip

\noindent and a filter $P$ is said to be \textbf{prime} if it satisfies the conditions:

\medskip

$(p1) $$P$ is proper, that is $P \not = A$; \quad $(p2)$ if\ $a \vee b \in P$\ implies\ $a \in P$ or $b \in P$

\

We note, incidentally, that for Heyting algebras, the kernel of a homomorphism from a Heyting algebra into another, is a filter. Also, the notions of deductive systems and filters are equivalent (A. Monteiro, 1959). 

\

In the remainder of this paper, we are going to illustrate how the construction in Section \textbf{2.} gives a general support to represent algebraic structures as: $H$-$B$-algebras, three-valued {\L}ukasiewicz algebras, symmetrical Heyting algebras and Nelson ones.

In the sequel, we assume some familiarity with these structures.

\ 

\noindent \textbf{A) Representation of $H$-$B$-algebras and three-valued {\L}ukasiewicz algebras}

Let $(A, \wedge, \vee, \Rightarrow, \dotdiv, \righthalfcap, \lefthalfcap, 0, 1)$ be a $H$-$B$-algebra. 

%theorem 3.1
\begin{theo}\label{repr-H-B}
For every $H$-$B$-algebra $A$, there exists an isomorphism $h$ from $A$ into the $H$-$B$-algebra of sets ${\cal O}_{R}$ of the `concrete' Rauszer Boolean algebra derived from $A$ and enriched with an abstract binary operation.
\end{theo}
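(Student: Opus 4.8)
The plan is to follow the classical Stone--McKinsey--Tarski representation strategy, adapted to the $H$-$B$ setting. First I would take $Ob$ to be the set of all prime filters of the underlying bounded distributive lattice $(A, \wedge, \vee, 0, 1)$, and let $R$ be set-inclusion $\subseteq$ on $Ob$; this is visibly a preorder (in fact a partial order), so Section~\textbf{2.} applies and yields the `concrete' Rauszer Boolean algebra $({\cal P}(Ob), \cap, \cup, -, \emptyset, Ob, I_{\ms R}, C_{\ms R})$ together with the $H$-$B$-algebra of sets $({\cal O}_{R}, \cap, \cup, \Rightarrow, \dotdiv, \righthalfcap, \lefthalfcap, \emptyset, Ob)$. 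Define $h : A \to {\cal P}(Ob)$ by $h(a) = \{P \in Ob : a \in P\}$, the usual Stone map.

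Next I would check that $h(a)$ is always $R$-open, i.e.\ $h(a) \in {\cal O}_{R}$: since each prime filter is upward closed, $P \in h(a)$ and $P \subseteq Q$ force $a \in Q$, so $h(a)$ is an up-set, and up-sets are exactly the $R$-open sets when $R$ is $\subseteq$ (because $R(P) = \{Q : P \subseteq Q\}$ and $I_{\ms R}X = \bigcup\{R(P) : R(P) \subseteq X\} = X$ precisely when $X$ is an up-set). Then I would verify that $h$ is a homomorphism for each operation. For $\wedge$ and $\vee$ this is the standard prime-filter argument (the nontrivial direction for $\vee$ uses the prime-filter separation/extension lemma). For the lattice bounds, $h(0) = \emptyset$ (no proper filter contains $0$) and $h(1) = Ob$. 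For the Heyting implication $\Rightarrow$ and the Brouwer difference $\dotdiv$, the key is to show $h(a \Rightarrow b) = h(a) \Rightarrow h(b) = I_{\ms R}(h(a) \supset h(b))$ and dually $h(a \dotdiv b) = C_{\ms R}(h(a) \cap -h(b))$; these reduce to the classical facts that in the distributive-lattice-of-prime-filters picture, relative pseudocomplement corresponds to the interior of a Boolean implication of up-sets, and dual pseudocomplement (pseudo-difference) to the closure of the Boolean difference. The unary connectives $\righthalfcap$ and $\lefthalfcap$ are then handled automatically since $\righthalfcap a = a \Rightarrow 0$ and $\lefthalfcap a = 1 \dotdiv a$ in $A$, while $\righthalfcap G = G \Rightarrow \emptyset$ and $\lefthalfcap G = Ob \dotdiv G$ in ${\cal O}_{R}$, so the homomorphism property transfers. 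Finally, injectivity of $h$ follows from the prime-filter separation theorem for distributive lattices: if $a \neq b$, say $a \not\leq b$, there is a prime filter containing $a$ but not $b$, so $h(a) \neq h(b)$.

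The main obstacle I anticipate is the verification of the $\Rightarrow$ and $\dotdiv$ clauses, and in particular making sure the relevant prime-filter extension arguments go through in both directions simultaneously --- i.e.\ that the same relation $R = {\subseteq}$ serves both the Heyting operation (which by its nature wants up-sets and the order $\subseteq$) and the Brouwer operation (which by its nature is dual and would `want' the converse order $S$). The resolution is precisely the observation made in Section~\textbf{2.} that on ${\cal O}_{R}$ one has $C_{\ms R}X = -I_{\ms S}-X$, so the closure operator built from $R$ already encodes the down-set behaviour needed for $\dotdiv$; thus no second relation is required, and the single poset of prime filters with inclusion carries the full $H$-$B$-structure. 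One must simply be careful, when proving $h(a \dotdiv b) = C_{\ms R}(h(a) \cap -h(b))$, to use the dual separation lemma (extending to a prime filter avoiding a given filter while capturing a given element below), which is the point where a little genuine work, rather than bookkeeping, is needed.

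\medskip

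\noindent Once Theorem~\ref{repr-H-B} is in place, the representation of three-valued {\L}ukasiewicz algebras is obtained by specializing: a three-valued {\L}ukasiewicz algebra can be presented as an $H$-$B$-algebra satisfying the appropriate additional identities, and since $h$ preserves all the $H$-$B$-operations it automatically preserves the derived {\L}ukasiewicz structure, so the image $h(A)$ is a subalgebra of ${\cal O}_{R}$ of the required type.
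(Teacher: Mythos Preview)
Your proposal is correct and follows essentially the same route as the paper: take $Ob$ to be the prime filters of $A$ with $R = {\subseteq}$, use the Stone map $h(a) = \{P : a \in P\}$, check $h(a) \in {\cal O}_{R}$, quote the standard lattice facts for $\wedge, \vee, 0, 1$ and injectivity, and then do the real work on $h(a \Rightarrow b)$ and $h(a \dotdiv b)$ via prime-filter/prime-ideal extension. Your anticipated ``obstacle'' is slightly over-engineered: the paper never invokes $C_{\ms R}X = -I_{\ms S}-X$ here but simply works directly with $C_{\ms R}$ and, for the nontrivial inclusion in the $\dotdiv$ clause, runs the Birkhoff--Stone argument on the ideal side (extending the ideal $I(-P,b) = \{v : v \dotdiv b \in -P\}$), which is exactly the ``dual separation lemma'' you point to at the end.
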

\begin{proof}
Given the collection $Ob$ of prime filters in $A$, ordered by $R$, we assign to each lattice element $x$ the set consisting of all prime filters $P$ containing the element $x$, that is:
\[
h(x) = \{P \in Ob : x \in P\}
\]

We show that $h(x) \in {\cal O}_{R}$, that is $I_{\ms R} h(x) = h(x)$.

In fact, $(i)$ $I_Rh(x) \subseteq h(x)$. To prove the converse inclusion $(ii)$ $h(x) \subseteq I_{\ms R}h(x) = \bigcup \{R(P) : R(P) \subseteq h(x)\}$, let $P \in h(x)$, i.e.\ $x \in P$. Hence, for all $Q \in R(P)$, we have $P \subseteq Q$ and $x \in Q$, i.e.\ $P \in \bigcup \{R(P) : R(P) \subseteq h(x)\} = I_{\ms R}h(x)$.

\smallskip

Now recall that we have the following facts (\cite{Ras74}, p.58) :
\vspace{-0.2cm}
\begin{itemize}
\item[(h0)] $h$ is one-to-one, increasing, and $h(1) = Ob$
\item[(h1)] $h(a \wedge b) = h(a) \cap h(b)$ and $h(a \vee b) = h(a) \cup h(b)$
\item[(h2)] $h(\righthalfcap a) = \righthalfcap h(a)$
\end{itemize}
In order to complete the proof we need to show that: 
\begin{itemize}
\item[(h3)] $h(a \Rightarrow b) = h(a) \Rightarrow h(b)$
\item[(h4)] $h(a \dotdiv  b) = h(a) \dotdiv  h(b)$
\end{itemize}

$((h3) \rightarrow)$\ We remark that $h(a) \Rightarrow h(b) = I_{\ms R}(I_{\ms R}h(a) \supset I_{\ms R}h(b)) =\\
I_{\ms R}(-h(a) \cup h(b)) = \bigcup \{R(P) :  R(P) \subseteq -h(a) \cup h(b)\}$.

Let $P \in h(a \Rightarrow b)$, i.e.\ $a \Rightarrow b \in P$. We know that $P \in R(P)$. Let $Q \in R(P)$, i.e.\ $P \subseteq Q$. If $Q \not \in -h(a)$ then $Q \in h(a)$, i.e.\ $a \in Q$. Hence $a, a \Rightarrow b \in Q$. By modus ponens $b \in Q$, that is $Q \in h(b)$.

$((h3) \leftarrow)$\ Let $Q \in h(a) \Rightarrow h(b)$, then there is a prime filter $P_0$ such that $Q \in R(P_0)$ with $R(P_0) \subseteq -h(a) \cup h(b)$. 
%Thus $a \not in Q$ or $b \in D$. 
If $b \in Q$ then $b \leq a \Rightarrow b \in Q$, i.e.\ $Q \in h(a \Rightarrow b)$. 
On account of $P_{0} \subseteq Q$, if $b \not \in Q$ then $a \not \in Q$. We consider the filter $F(Q,a)$ generated by $Q$ and $a$, that is $F(Q,a) = \{u :  a \Rightarrow u \in Q\}$. 
If $a \Rightarrow b \not \in Q$ we infer $b \not \in F(Q,a)$. By the well known Birkhoff-Stone theorem, there is a prime filter $Q'$ containing $a$ and $Q$ such that $b \not \in Q'$. This leads to $R(P_0) \not \subseteq -h(a) \cup h(b)$, a contradiction.

\medskip

$((h4) \rightarrow)$\ We remark that $h(a) \dotdiv h(b) = C_{\ms R}(I_{\ms R}h(a) \cap - I_{\ms R}h(b)) =\\
C_{\ms R}(h(a) \cap -h(b)) = \bigcup \{R(P) :  P \subseteq h(a) \cap -h(b)\}$

Let $P \in h(a \dotdiv b)$, i.e.\ $a \dotdiv b \leq a \in P$ (\cite{Raus74}, p.221),(\cite{War-Dil39}, p.337) then $P \in h(a)$. 
If $P \in -h(b)$ we have $P \in R(P)$ and $P \in h(a) \cap -h(b)$, that is $P \in h(a) \dotdiv h(b)$. 
If $P \not \in -h(b)$, then $b \in P$. We consider the ideal $I(-P,b)$ generated by the prime ideal $-P$ 
and $b$. We have $I(-P,b) = \{v : v \dotdiv b \in -P \}$.
Since $a \dotdiv b \not \in -P$ we infer $a \not \in I(-P,b)$. Again, by the Birkhoff-Stone result, we deduce that there is a prime ideal $I'$ containing $I(-P,b)$ and not containing $a$. Hence, $Q = -I' \subseteq P$ is a prime filter $Q\subseteq P$ such that $a \in Q$ and $b \not \in Q$. So, $P \in R(Q)$ with $Q \in h(a) \cap -h(b)$, that is $P \in h(a)\dotdiv  h(b)$.

$((h4) \leftarrow)$ Let $P \in h(a) \dotdiv h(b) = \bigcup \{R(Q) :  Q \subseteq h(a) \cap -h(b)\}$, then there is $Q_{0}$ such that $P \in R(Q_{0})$, $a \in Q_{0}$ and $b \not \in Q_{0}$. We have $a \leq a \vee b \in Q_{0}$ and $a \vee b = b \vee (a \dotdiv b) \in Q_{0}$ (see \cite{Raus74}, p.221). Since $Q_{0}$ is a prime filter we infer $(\alpha)$ $b \in Q_{0}$ or $(\beta)$ $a \dotdiv b \in Q_{0}$. Since $(\alpha)$ is impossible then $a \dotdiv b \in Q_{0} \subseteq P$, i.e.\ $P \in h(a \dotdiv b)$.
\end{proof}
%end theorem 3.1

Thus theorem \ref{repr-H-B} is proved.

\smallskip

%corollary 3.2
\begin{coro}
For every three-valued {\L}ukasiewicz algebra $A$, there exists an isomorphism $h$ from $A$ into the $H$-$B$-algebra of sets ${\cal O}_{R}$ of the `concrete' Rauszer Boolean algebra derived from $A$ and enriched with an abstract binary operation.
\end{coro}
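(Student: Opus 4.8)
The plan is to derive the corollary from Theorem \ref{repr-H-B} by exploiting the well-known fact that every three-valued {\L}ukasiewicz algebra carries, term-definably, the structure of an $H$-$B$-algebra. First I would recall that a three-valued {\L}ukasiewicz algebra $(A, \wedge, \vee, \sim, \nabla, 0, 1)$ (Boolean negation-free signature with the De Morgan complement $\sim$ and the possibility operator $\nabla$, together with the modal operator $\Delta a = \sim\nabla\sim a$) admits a Heyting implication $\Rightarrow$ and a Brouwer pseudo-difference $\dotdiv$ definable from its primitive operations; this is classical and goes back to Moisil, and is exactly the observation that makes three-valued {\L}ukasiewicz algebras a subvariety of $H$-$B$-algebras (equivalently, of symmetrical Heyting algebras satisfying the appropriate three-valuedness identity). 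So the underlying lattice reduct of $A$, enriched with these derived operations, \emph{is} an $H$-$B$-algebra in the sense used in Theorem \ref{repr-H-B}.

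Next I would apply Theorem \ref{repr-H-B} to this $H$-$B$-algebra: it yields a collection $Ob$ of prime filters of $A$ ordered by $R = {\subseteq}$, the `concrete' Rauszer Boolean algebra $({\cal P}(Ob), I_{\ms R}, C_{\ms R})$, and an injective map $h(x) = \{P \in Ob : x \in P\}$ which embeds $A$ into ${\cal O}_{R}$ preserving $\wedge, \vee, \Rightarrow, \dotdiv$ (hence also $\righthalfcap$ and $\lefthalfcap$, and $0, 1$). What remains is to check that $h$ also preserves the {\L}ukasiewicz-specific operations $\sim$ and $\nabla$ (equivalently $\Delta$). For $\sim$ this is the standard observation that $h(\sim a)$ equals the De~Morgan complement of $h(a)$ computed in ${\cal O}_{R}$, which can be expressed via the $H$-$B$-structure (for instance $\sim a = a \dotdiv \text{(something)}$ or through the interdefinability $\sim\Delta a = \righthalfcap a$, $\sim\nabla a = \lefthalfcap\sim a$), so its preservation is already a formal consequence of (h1)--(h4); similarly $\nabla$ and $\Delta$ are term-definable from $\Rightarrow, \dotdiv, \righthalfcap, \lefthalfcap$ on any $H$-$B$-algebra of this kind, so $h(\nabla a) = \nabla h(a)$ and $h(\Delta a) = \Delta h(a)$ follow automatically. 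Since the image ${\cal O}_{R}$, as a subalgebra of the Rauszer algebra, inherits exactly these derived operations, $h$ is an isomorphism onto its image as three-valued {\L}ukasiewicz algebras.

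The main obstacle, and the one place where genuine care is needed, is pinning down the correct signature and the term-definitions: one must verify (citing Moisil, or the author's earlier work) that the Heyting implication and Brouwer difference obtained from the {\L}ukasiewicz structure coincide with the operations $\Rightarrow$ and $\dotdiv$ appearing in Theorem \ref{repr-H-B}, and conversely that $\sim, \nabla, \Delta$ are recoverable as $H$-$B$-terms so that their preservation is not an extra hypothesis but a theorem. Once the identification of reducts is made precise, the corollary is immediate: apply the theorem, observe that all {\L}ukasiewicz operations are $H$-$B$-terms, and conclude that the embedding $h$ respects them. I would therefore structure the proof as: (1) recall the term-definability results situating three-valued {\L}ukasiewicz algebras inside $H$-$B$-algebras; (2) invoke Theorem \ref{repr-H-B} to get $h$; (3) note that preservation of the remaining operations is forced by (h1)--(h4) together with (1); hence $h$ is the desired isomorphism.
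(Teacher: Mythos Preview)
Your approach is essentially the paper's: recognise that a three-valued {\L}ukasiewicz algebra \emph{is} an $H$-$B$-algebra and then invoke Theorem \ref{repr-H-B}. The paper does this more sharply by citing the precise equational characterisation (from \cite{Itu76b}): three-valued {\L}ukasiewicz algebras are exactly the $H$-$B$-algebras satisfying $(T)\ (a \Rightarrow b) \vee (b \Rightarrow \righthalfcap\lefthalfcap a) = 1$, and then simply rewrites $(T)$ inside ${\cal O}_{R}$. Your step (3), checking that $h$ preserves $\sim$, $\nabla$, $\Delta$, is superfluous for the corollary as stated --- the target is only the $H$-$B$-algebra ${\cal O}_{R}$, so once (h0)--(h4) hold you are done; there is no need to exhibit $\sim$ or $\nabla$ as $H$-$B$-terms (and your tentative formulas for $\sim$ are not quite right as written). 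Drop that part and replace your vague ``term-definably carries $H$-$B$ structure'' by the specific axiom $(T)$, and your proof coincides with the paper's.
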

\begin{proof}
It is sufficient to remark that a three-valued {\L}ukasiewicz algebra is a $H$-$B$-algebra satisfying the following condition (\cite{Itu76b}, p.123):
\[
(T) \qquad (a \Rightarrow b) \vee (b \Rightarrow \righthalfcap \lefthalfcap a)= 1
\]

\noindent On ${\cal O}_{R}$, this equality becomes
\[
(T_{{\cal O}_{R}}) \quad (h(x) \Rightarrow h(y)) \cup (h(y) \Rightarrow h(\righthalfcap \lefthalfcap x)) = Ob
\]

\noindent Since 
\begin{align*}
h(\righthalfcap \lefthalfcap  x) &= h(\lefthalfcap x \Rightarrow 0) = I_{\ms R}(-h(\lefthalfcap x) \cup \emptyset) = I_{\ms R}(-h(1 \dotdiv x)) = \\
& = I_{\ms R}(-C_{\ms R}(h(1) \cap -h(x))) =
 I_{\ms R}(-C_{\ms R}-h(x)) = I_{\ms R}(I_{\ms S}h(x))
\end{align*}
we obtain that $(T_{{\cal O}_{R}})$ is equivalent to
\begin{align*}
&I_{\ms R}[-h(x) \cup h(y)] \cup I_{\ms R}[-h(y) \cup I_{\ms R}I_{\ms S}h(x)] = Ob
\end{align*}
\end{proof}
%end corollary 3.2

\noindent \textbf{B) Representation of symmetrical Heyting algebras}

%begin remark 3.3
\begin{remark}Some distributive lattices are equipped with a De Morgan negation or with a Kleene negation.
\end{remark}
We recall that a De Morgan algebra $(A, \wedge, \vee, \sim, 0, 1)$, or simply $A$, is an algebra of type $(2, 2, 1, 0, 0)$ such that $(A, \wedge, \vee, 0, 1)$ is a distributive lattice with zero $0$ and unit $1$, and $\sim$ fulfills the equalities:
\[
(DM_1) \sim\ \sim a = a \qquad \text{and} \qquad (DM_2) \sim (a \wedge b) =\ \sim a\ \vee \sim b.
\]

Moisil (\cite{Moisil35}, p.90) was the first to consider ``une  logique distributive dou\' ee d'une dualit\' e involutive $a \rightarrow \overline{a}$ ''. See also (\cite{Moisil72}, p.411).

\medskip

A De Morgan algebra $A$, in which the condition
\[
(K_{a,b})\quad a\ \wedge \sim a \leq b\ \vee \sim b, \quad \text{for any}\ a, b \in A
\]
\noindent holds, is called a \textbf{Kleene algebra}. J.A. Kalman \cite{Kalman58} has considered Kleene algebras under the name of `normal distributive i-lattices'.
%end remark 3.3

%definition 3.4
\begin{defi}
A \textbf{symmetrical Heyting algebra} $(A, \wedge, \vee, \Rightarrow, \righthalfcap,  \sim, 1)$, or simply $A$, is an algebra of type $(2, 2, 2, 1, 1, 0)$, satisfying the following conditions (\cite{Mont80}, p.61):
\begin{itemize}
\item[] (SH1) $(A, \wedge, \vee, \Rightarrow, \righthalfcap, 1)$ is a Heyting algebra
\item[](SH2) $\sim\ \sim x = x$
\item[](SH3) $\sim (x \wedge y) =\ \sim x\ \vee \sim y$
\end{itemize}
\end{defi}
%end definition 3.4

In these cases, the set $Ob$ of prime filters $P$ in the lattice $(A, \wedge, \vee, 0, 1)$, is enriched with an involution $\varphi$ of $Ob$. 

In fact, let $Ob$ be the set of all prime filters in $A$, and for every $P \in Ob$, let $\sim P =\{\sim p : p \in P\}$.

Let $\varphi : Ob \rightarrow Ob$ be the Bia{\l}ynicki-Birula and Rasiowa (\cite{Ras74}, pp.45-46) mapping defined by:
\[
\varphi(P) = -(\sim P)
\]

The set $\varphi(P)$ is a prime filter. $\varphi$ is a one-to-one mapping from $Ob$ onto $Ob$ such that, for all $P \in Ob$: 
\[
\varphi(\varphi (P)) = P
\]

This mapping determines a De Morgan operation $\sim$ on ${\cal P}(Ob)$ in the following way: 
\[
\sim X = - \varphi (X), \quad \text{for any}\quad X \subseteq Ob.
\]

If A is a Kleene algebra, then the involution $\varphi$ fulfills --in addition-- the following condition (Bia{\l}ynicki-Birula and Rasiowa, 1958): 
\[
(K) \qquad P \subseteq \varphi(P) \quad \text{or} \quad \varphi(P) \subseteq P.
\]

If $(A, \wedge, \vee, \Rightarrow, \righthalfcap,  \sim, 1)$ is a symmetrical Heyting algebra (resp.\ a $H-B$-algebra), then the set ${\cal O}_{R}$ has some special properties.

\medskip

If $G \in {\cal O}_{R}$ then $\sim G \in {\cal O}_{R}$.

Assume $G$ is $R$-open. For one side we have $(i)$ $I_{\ms R}(\sim G) \subseteq\ \sim G$. To prove the converse inclusion, $(ii)$ $\sim G \subseteq I_{\ms R}(\sim G)$ let $P \in\ \sim G = - \varphi(G)$. That is $P \not \in \varphi(G)$ and $(a)$ $\varphi(P) \not \in G$.
If $P \not \in I_{\ms R}(\sim G) = \bigcup \{R(Q) : R(Q) \subseteq - \varphi(G)\}$, then $R(P) \not \subseteq - \varphi(G)$. In this case there is $Q$ such that $P \subseteq Q$ and $Q \in \varphi(G)$. Thus $\varphi(Q) \subseteq \varphi(P)$ and $\varphi(Q) \in G \in{\cal O}_{R}$. Hence $R(\varphi(Q)) \subseteq G$ and \\
$\varphi(P) \in R(\varphi(Q)) \subseteq G$, which contradicts $(a)$.

\smallskip

Therefore the system $({\cal O}_{R}, \cap, \cup, \Rightarrow, \dotdiv, \sim, \emptyset, Ob)$ is a \textbf{symmetrical $\textbf{H}$-$\textbf{B}$-algebra}.

\

If $(A, \wedge, \vee, \Rightarrow, \righthalfcap,  \sim, 1)$ is a symmetrical Heyting algebra, then the operation $\dotdiv$ can be expressed on ${\cal O}_{R}$ in terms of $\sim$ and $\Rightarrow$, in the following way:
\[
G \dotdiv H =\ \sim ( \sim H \Rightarrow\ \sim G)
\]

Indeed, we will prove that:
\begin{itemize}
\item[]$(i)$ $G \subseteq [H\ \cup\ \sim (\sim H \Rightarrow\ \sim G)]$
\item[]$(ii)$ $\text{if}\ G \subseteq H \cup X$, \text{then} $\sim ( \sim H \Rightarrow\ \sim G) \subseteq X$
\end{itemize}

The proof of $(i)$ follows from the following equivalences on account of the intuitionistic equality $x \wedge (x \Rightarrow y) = x \wedge y$ (\cite{Ras74}, p.55):
\begin{align*}
G &\subseteq [H\ \cup\ \sim (\sim H \Rightarrow\ \sim G)] \Longleftrightarrow\ \sim [H\ \cup \sim (\sim H \Rightarrow\ \sim G)]\subseteq\ \sim G\\ 
&\Longleftrightarrow\ \sim H \cap (\sim H \Rightarrow\ \sim G) =\ \sim H\ \cap \sim G \subseteq\ \sim G. 
\end{align*}

To prove $(ii)$, assume $G \subseteq H \cup X$. Hence $\sim H\ \cap \sim X \subseteq\ \sim G$. So,\\ $\sim X \subseteq\ \sim H \Rightarrow\ \sim G$, i.e. $\sim (\sim H \Rightarrow\ \sim G) \subseteq X$.

The operation $E(G, H) =\ \sim (\sim H \Rightarrow\ \sim G)$ was introduced by Moisil (\cite{Moisil72}, p.412) to represent the \textbf{`exception'}.

%theorem 3.5
\begin{theo} \label{repr-SH}
For every symmetrical Heyting algebra $A$, there exists an isomorphism $h$ from $A$ into the symmetrical $H$-$B$-algebra of sets ${\cal O}_{R}$ of the `concrete' Rauszer algebra derived from $A$ and enriched with an abstract binary operation.
\end{theo}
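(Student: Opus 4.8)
The plan is to reuse the machinery of Theorem \ref{repr-H-B} almost verbatim and add only the verification that the De Morgan negation $\sim$ is preserved by the representation map $h$. First I would take $Ob$ to be the set of all prime filters of the underlying lattice of $A$, ordered by $R = \subseteq$, and set $h(x) = \{P \in Ob : x \in P\}$ as before. Since a symmetrical Heyting algebra is in particular a Heyting algebra and, via the Bia{\l}ynicki-Birula--Rasiowa involution $\varphi$, carries the structure discussed just before this theorem, the facts $(h0)$--$(h4)$ from the proof of Theorem \ref{repr-H-B} are available unchanged (the operation $\dotdiv$ on $A$ is definable from $\Rightarrow$ and $\sim$ exactly as $\dotdiv$ on ${\cal O}_R$ is definable from $\Rightarrow$ and $\sim$, so $(h4)$ needs no separate argument once $(h2')$ below is established). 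Thus $h$ is an injective lattice-and-Heyting homomorphism with image inside ${\cal O}_R$, and by the paragraph preceding the theorem $({\cal O}_R, \cap, \cup, \Rightarrow, \dotdiv, \sim, \emptyset, Ob)$ is a symmetrical $H$-$B$-algebra, so the target structure is of the right type.

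The only genuinely new step is:
\begin{itemize}
\item[(h2$'$)] $h(\sim a) =\ \sim h(a)$, i.e. $h(\sim a) = - \varphi(h(a))$.
\end{itemize}
I would argue by the chain of equivalences $P \in h(\sim a) \iff\ \sim a \in P \iff a \in\ \sim P \iff a \notin -(\sim P) = \varphi(P) \iff \varphi(P) \notin h(a) \iff P \in -\varphi(h(a)) =\ \sim h(a)$; here the step $a \in\ \sim P \iff \sim a \in P$ uses $(DM_1)$, i.e. $\sim\sim a = a$, and the observation that $\sim P = \{\sim p : p \in P\}$ together with involutivity gives $\sim(\sim P) = P$. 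One must also note that $\varphi(h(a)) = \{\varphi(P) : P \in h(a)\}$, which follows from $\varphi$ being a bijection of $Ob$ with $\varphi \circ \varphi = \mathrm{id}$. This is the routine core; the slight subtlety is keeping straight that $\sim$ on ${\cal P}(Ob)$ is defined as $-\varphi(-)$, not pointwise, and that $-\varphi(X) = \varphi(-X)$ precisely because $\varphi$ is a bijection.

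Finally I would assemble the pieces: from $(h0)$, $h$ is an injective order embedding; from $(h1)$, $(h3)$, $(h2')$ (and the definability of $\dotdiv$, $\righthalfcap$, $\lefthalfcap$ in both algebras) $h$ commutes with all the operations; hence $h$ is an isomorphism of $A$ onto the subalgebra $h(A)$ of ${\cal O}_R$, which is a symmetrical $H$-$B$-algebra of sets. I expect no serious obstacle: the one point to watch is that the prime-filter machinery and the $\varphi$-construction are exactly those of Bia{\l}ynicki-Birula--Rasiowa recalled above, so the Birkhoff--Stone separation steps hiding inside $(h3)$ need no re-proof, and the De Morgan clause $(h2')$ is a direct computation. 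If one wants the Kleene/three-valued refinement, one would additionally check that condition $(K)$, $P \subseteq \varphi(P)$ or $\varphi(P) \subseteq P$, transfers to the image, but that is outside the present statement.
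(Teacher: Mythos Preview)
Your proposal is correct and follows essentially the same route as the paper: the paper's own proof also reduces everything to the steps already done in Theorem \ref{repr-H-B} and then verifies $h(\sim x) = \sim h(x)$ by exactly the chain of equivalences you wrote (citing \cite{Ras74}, p.~46). Your parenthetical about $(h4)$ is a harmless extra remark, since the signature of a symmetrical Heyting algebra in this paper does not include $\dotdiv$; the paper simply invokes ``some steps in the proof of Theorem \ref{repr-H-B}'' and the preceding observation that $h(x)$ and $\sim h(x)$ lie in ${\cal O}_R$.
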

\begin{proof}
By results above we remark that $h(x) \in {\cal O}_{R}$ and $\sim h(x) \in {\cal O}_{R}$.

The proof of $h(\sim x) =\ \sim h(x)$ (see \cite{Ras74}, p.46) follows from the following equivalences:
\begin{align*}
P \in h(\sim x) &\Leftrightarrow\ \sim x \in P \Leftrightarrow x \in\ \sim P \Leftrightarrow x \not \in -(\sim P) = \varphi(P) \\&\Leftrightarrow 
\varphi(P) \not \in h(x) \Leftrightarrow P \not \in \varphi(h(x)) \Leftrightarrow P \in - \varphi(h(x)) =\ \sim h(x).
\end{align*}

Some steps in the proof of Theorem \ref{repr-H-B} complete the statement.
\end{proof}
%end theorem 3.5

\

We close this section with the following facts, which lead us to envisage the representation theorem in the next case.

On $({\cal O}_{R}, \cap, \cup, -, \Rightarrow, \sim, \emptyset, Ob)$, we can consider the binary implication $\rightarrow_{w}$ defined by (\cite{Mont63a}, p.361):
\[
G \rightarrow_{w}H = G \Rightarrow (\sim G \cup H),\quad \text{for} \ G, H \in {\cal O}_{R}
\]

We remark that: 
\begin{align*}
G \rightarrow_{w} H = Ob &\quad  \text{iff } \quad  G \Rightarrow (\sim G \cup H) = Ob\\
&\quad \text{iff} \quad G\ \subseteq\ (\sim G \cup H) \quad \text{iff} \quad\ G = G\ \cap (\sim G \cup H).
\end{align*}

%proposition 3.6
\begin{propo}\label{arrow-W}
Using properties of Kleene symmetrical Heyting algebras, it follows that, for all $G, H, K \in {\cal O}_{R}$ we have:
\begin{itemize}
\item[] (M0) $G\ \cap \sim G \subseteq H\ \cup \sim H$
\item[](M1) $G \rightarrow_{w}G = Ob$
\item[](M2) $G \cap (G \rightarrow_{w}H) = G \cap (\sim G \cup H)$
\item[](M3) $G \rightarrow_{w}(H \rightarrow_{w}K) \subseteq (G \cap H)\rightarrow_{w}K$
\end{itemize}
\end{propo}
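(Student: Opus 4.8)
The plan is to verify each of the four items by unravelling the definition $G \rightarrow_{w} H = G \Rightarrow (\sim G \cup H)$ and exploiting the facts already established for ${\cal O}_{R}$: that it is a symmetrical $H$-$B$-algebra, that $\Rightarrow$ is the Heyting implication satisfying $(H1)$ and $(H2)$, that the Kleene condition $(K)$ holds for the involution $\varphi$, and the remark preceding the statement, namely that $G \rightarrow_{w} H = Ob$ iff $G \subseteq \sim G \cup H$ iff $G = G \cap (\sim G \cup H)$.

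First I would dispose of $(M0)$: since $A$ is a Kleene symmetrical Heyting algebra, $(K)$ says $P \subseteq \varphi(P)$ or $\varphi(P) \subseteq P$ for every prime filter $P$; translating through $\sim X = -\varphi(X)$ and the lattice operations on ${\cal P}(Ob)$ this yields exactly $G \cap \sim G \subseteq H \cup \sim H$ for all $G, H$, mirroring the abstract inequality $(K_{a,b})$. For $(M1)$, by the remark it suffices to check $G \subseteq \sim G \cup G$, which is immediate. For $(M2)$, I would use the intuitionistic identity $x \cap (x \Rightarrow y) = x \cap y$ (already invoked earlier in the excerpt, \cite{Ras74}, p.55), applied with $x = G$ and $y = \sim G \cup H$: this gives $G \cap (G \rightarrow_w H) = G \cap (G \Rightarrow (\sim G \cup H)) = G \cap (\sim G \cup H)$ directly.

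The main work, and the step I expect to be the real obstacle, is $(M3)$: $G \rightarrow_w (H \rightarrow_w K) \subseteq (G \cap H) \rightarrow_w K$. Unfolding, the left side is $G \Rightarrow (\sim G \cup (H \Rightarrow (\sim H \cup K)))$ and the right side is $(G\cap H) \Rightarrow (\sim(G \cap H) \cup K) = (G \cap H) \Rightarrow (\sim G \cup \sim H \cup K)$, using $(DM_2)$. Here I would argue by the adjunction $(H2)$: it is enough to show $(G \cap H) \cap \big(G \Rightarrow (\sim G \cup (H \Rightarrow (\sim H \cup K)))\big) \subseteq \sim G \cup \sim H \cup K$. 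On the left, $G$ together with $G \Rightarrow (\cdots)$ gives, via $(H1)$, that $G \cap H$ meets $\sim G \cup (H \Rightarrow (\sim H \cup K))$; intersecting with $G$ kills the $\sim G$ disjunct up to the term $G \cap \sim G$, which by $(M0)$ is already inside $\sim H \cup \sim(\ldots) \subseteq \sim G \cup \sim H \cup K$ — this is exactly where the Kleene hypothesis is needed — so we are reduced to $G \cap H \cap (H \Rightarrow (\sim H \cup K)) \subseteq \sim G \cup \sim H \cup K$, and then $(H1)$ applied to $H$ and $H \Rightarrow (\sim H \cup K)$ gives $H \cap (H \Rightarrow (\sim H \cup K)) \subseteq \sim H \cup K$, finishing the inclusion. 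The delicate point throughout is the careful bookkeeping of the $\sim G \cap G$ "garbage" term and the appeal to $(M0)$ to absorb it; without the Kleene property $(M3)$ would fail, so the proof must make that use explicit.
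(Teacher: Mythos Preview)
Your plan is correct and follows essentially the same route as the paper: $(M0)$ via the condition $(K)$ on $\varphi$, $(M1)$ and $(M2)$ immediately from the Heyting identity $x\wedge(x\Rightarrow y)=x\wedge y$, and $(M3)$ by the adjunction reduction to $(G\cap H)\cap\big(G\Rightarrow(\sim G\cup(H\Rightarrow(\sim H\cup K)))\big)\subseteq \sim G\cup\sim H\cup K$ followed by two applications of that same identity and distributivity.

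One correction, though: your appeal to $(M0)$ inside the proof of $(M3)$ is unnecessary, and your claim that ``without the Kleene property $(M3)$ would fail'' is not right. After distributing, the residual term is $G\cap H\cap\sim G$, and this is contained in $\sim G\subseteq \sim G\cup\sim H\cup K$ trivially --- no Kleene inequality is needed. The paper's computation makes this explicit: it rewrites the whole left-hand side as $(G\cap H)\cap(\sim G\cup\sim H\cup K)$ using only distributivity and the Heyting identity, so $(M3)$ already holds in any symmetrical Heyting algebra (De Morgan is used only for $\sim(G\cap H)=\sim G\cup\sim H$). The Kleene hypothesis is genuinely needed for $(M0)$, but not for $(M3)$.
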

\begin{proof}
To prove $(M0)$ assume $P \in G\ \cap \sim G$. That is $P \in G$ and $\varphi(P) \not \in G$. So, $\varphi(P) \not \in R(P)$ and $(i)$ $P \not \subseteq \varphi(P)$. If $P \not \in H\ \cup \sim H$ then $P \not \in H$ and $\varphi(P) \in H$. Hence $(ii)$ $\varphi(P) \not \subseteq P$. $(i)$ and $(ii)$ imply that the condition $(K)$ is not satisfied, a contradiction. Thus $({\cal O}_{R}, \cap, \cup, -, \Rightarrow, \sim, \emptyset, Ob)$ is a Kleene symmetrical Heyting algebra.

Since $G \subseteq ( \sim G\ \cup\ G)$ we deduce that $(M1)$ $G \rightarrow_{w}G =
G \Rightarrow (\sim\ G \cup\ G) = Ob$. 

Also, $(M2)$ $G \cap (G \rightarrow_{w}H) = G \cap (G \Rightarrow ( \sim G \cup H)) = G \cap (\sim G \cup H)$ (on the account of the intuitionistic equality 
$x \wedge (x \Rightarrow y) = x \wedge y$ (\cite{Ras74}, p.55)).

Next, we prove the inclusion:

\noindent $(M3)$ $G \Rightarrow (\sim G \cup (H \Rightarrow (\sim H \cup K))) \subseteq (G \cap H) \Rightarrow (\sim G\ \cup \sim H \cup K)$, \\
which in a Heyting algebra is equivalent to \\
$(G \cap H) \cap [G \Rightarrow (\sim G \cup (H \Rightarrow (\sim H \cup K)))] \subseteq (\sim G\ \cup \sim H \cup K)$.

\smallskip

On twice account of the equality $x \wedge (x \Rightarrow y) = x \wedge y$ we obtain:
\begin{flalign*}
H\ \cap\ & G \cap [G \Rightarrow (\sim G \cup (H \Rightarrow (\sim H \cup K)))] =\\
&H \cap [G \cap (\sim G \cup (H \Rightarrow (\sim H \cup K)))] =\\
&H \cap [(G\ \cap \sim G) \cup (G \cap (H \Rightarrow (\sim H \cup K)))] =\\
&(H \cap G\ \cap \sim G) \cup (G \cap H \cap (H \Rightarrow (\sim H \cup K))) =\\
&(H \cap G\ \cap \sim G)\ \cup (G \cap H \cap (\sim H \cup K)) =\\
&(G \cap H) \cap (\sim G\ \cup \sim H \cup K) \subseteq\ \sim G\ \cup \sim H \cup K
\end{flalign*}
\end{proof}
%end proposition 3.6 
This completes the proof of Proposition \ref{arrow-W}

\newpage

\noindent \textbf{C) Representation of Nelson algebras}

%definition 3.7
\begin{defi}\label{Nelson}
A \textbf{Nelson algebra} $(A, \wedge, \vee, \rightarrow_{w}, \sim, 1)$, or simply $A$, is an algebra of type $(2, 2, 2, 1, 0)$, satisfying the following conditions (\cite{Mont-Mont73}, p.3):
\begin{itemize}
\item[](N0) $(A, \wedge, \vee,  \sim, 1)$ is a \textbf{Kleene algebra}
\item[](N1) $a \rightarrow_{w}a = 1$
\item[](N2) $a \wedge (a \rightarrow_{w}b) = a \wedge (\sim a \vee b)$
\item[](N3) $a \rightarrow_{w}(b \rightarrow_{w}c) = (a \wedge b) \rightarrow_{w}c$
\end{itemize}
\end{defi}
%end definition 3.7

If $A$ is a Nelson algebra, $\rightarrow_{w}$ is called the \textbf{weak implication} sign and $\sim$ is the \textbf{strong negation}. Let us put $\sim 1 = 0$. Also a \textbf{weak negation} $\righthalfcup_{w}$ can be defined in the following way: $\righthalfcup_{w}\, a = a \rightarrow_{w}0$

From $(N1)$ and $(N2)$ we get that $1$ is the last element of $A$ \cite{Mont-Mont73}. For a detailed study of this structure, see \cite{Ras58}, \cite{Bri-Mont67}, \cite{Mont80}, \cite{Ras74}.

For Nelson algebras, the kernel of a homomorphism from a Nelson algebra into another, is a deductive system in regard to $\rightarrow_{w}$. 

In particular, deductive systems are filters (see (\cite{Mont63b}, p.4), (\cite{Ras74}, p.91)).

\medskip

It is know (see for example (\cite{Ras74}, p.70)  that the Nelson implication satisfies the equalities $(I1)-(I3)$ of Definition \ref{ded-alg}. That is, the system $(A, \rightarrow_{w}, Ob)$ is a deductive algebra. Therefore, the deduction theorem is satisfied and the deductive system generated by a set $X \not \equal \emptyset$ is (\cite{Mont63b}, p.5):
\[
D(X) = \{z \in A : (x_{1} \wedge x_{2} \wedge \ldots \wedge x_{n}) \rightarrow_{w}z = 1, \text{with}\ x_{1},  x_{2}, \ldots , x_{n} \in X\}
\]

In addition, we note that Rasiowa has shown that $(a\ \wedge \sim a) \rightarrow_{w} b = 1$ for any $a, b \in A$ and also $(a \wedge \righthalfcup_{w} a) \rightarrow_{w} b = 1$ for any $a, b \in A$ (\cite{Ras58}, p.65, \cite{Ras74}, p.68). This implies that if a deductive system $D$ is proper, the fact $a, \sim a \in D$, for any $a \in A$ leads to a contradiction.

\

Let $Ob$ be the set of all {\bf{prime filters}} in $A$, ordered by $\subseteq$. In (\cite{Ras74}, (4.21, 4.17), p.98); (\cite{Ras58}, (3.9), p.79) it is proved that $Ob$ is the union of two subsets $Ob = Ob_1\ \cup\ Ob_2$ such that $Ob_1$ is the set of all prime deductive systems and $\varphi(Ob_1) = Ob_2$.

In addition, if $P \in Ob_1$, then $P \subseteq \varphi(P)$, whereas if $P \in Ob_2$, then $\varphi(P) \subseteq P$ (\cite{Ras74},(4.19, 4.20), p.97).

According to the terminology in \cite{Ras74}, \cite{Ras58}, elements in $Ob_1$ are called prime s.f.f.k.\ (special filter of the first kind) and those in $Ob_2$ are named prime s.f.s.k.\ (special filter of the second kind).

\

We recall the following keystone result.
%proposition 3.8 
\begin{propo}\label{int-pro}

In a Nelson algebra $A$, the involution $\varphi$ has the \textbf{interpolation property} (\cite{Mont63a},p.361), that is: 

\noindent $(\varphi_{\ms {inter}}):$ \qquad for all $P, Q \in Ob$ satisfying the conditions:

$(\alpha)$ $P \subseteq \varphi (P)$, \quad $(\beta)$ $Q \subseteq \varphi (Q)$, \quad $(\gamma)$ $P \subseteq \varphi (Q)$, \quad $\{\, (\delta)$ $Q \subseteq \varphi (P)\, \}$

\noindent there is a prime filter $M$ such that:

\qquad \qquad $P \subseteq M$, \quad $Q \subseteq M$, \quad $M \subseteq \varphi (P)$, \quad $M \subseteq \varphi (Q)$.
\end{propo}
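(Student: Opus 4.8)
\textbf{Proof plan for Proposition \ref{int-pro}.}

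The plan is to reduce the existence of the interpolating prime filter $M$ to an application of the Birkhoff--Stone prime filter theorem, exactly as in the proofs of $(h3)$ and $(h4)$ of Theorem \ref{repr-H-B}. First I would build the natural candidate for $M$: set $F = $ the filter generated by $P \cup Q$, i.e.\ $F = \{u \in A : p \wedge q \leq u \text{ for some } p \in P,\ q \in Q\}$. The hypotheses $(\gamma)$ $P \subseteq \varphi(Q)$ and $(\delta)$ $Q \subseteq \varphi(P)$ are precisely what is needed to guarantee that $F$ is \emph{proper}: a failure of properness would give $p \wedge q = 0$ with $p \in P$, $q \in Q$, hence $p \leq \sim q$; but $q \in Q$ and $(\delta)$ give $q \in \varphi(P) = -(\sim P)$, so $\sim q \notin P$, whence $p \notin P$, a contradiction. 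Thus $F$ is a proper filter containing both $P$ and $Q$.

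Next I would locate an element that $M$ must avoid, so that the Birkhoff--Stone extension yields a prime filter. The target conditions $M \subseteq \varphi(P)$ and $M \subseteq \varphi(Q)$ say $M \cap (\sim P) = \emptyset$ and $M \cap (\sim Q) = \emptyset$; equivalently $M$ must be disjoint from the ideal $J$ generated by $(\sim P) \cup (\sim Q)$ (note $\sim P$, $\sim Q$ are ideals since $P, Q$ are filters and $\sim$ is a De~Morgan involution). So the real claim is that $F \cap J = \emptyset$. Suppose not: then for some $p \in P$, $q \in Q$, $p' \in P$, $q' \in Q$ we have $p \wedge q \leq\, \sim p' \vee \sim q'$, i.e.\ $p \wedge q \wedge p' \wedge q' = 0$. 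Writing $a = p \wedge p' \in P$ and $b = q \wedge q' \in Q$ this is $a \wedge b = 0$, and we are back to the properness computation of the previous paragraph --- contradiction. Hence $F \cap J = \emptyset$, and by the Birkhoff--Stone theorem there is a prime filter $M \supseteq F$ with $M \cap J = \emptyset$, which gives all four required inclusions $P \subseteq M$, $Q \subseteq M$, $M \subseteq \varphi(P)$, $M \subseteq \varphi(Q)$.

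The one subtlety --- and the place where hypotheses $(\alpha)$, $(\beta)$ and the Kleene/Nelson structure must actually be used --- is verifying that the ideals $\sim P$ and $\sim Q$ are themselves \emph{proper}, and more precisely that $F$ and $J$ as defined are not forced to overlap for a trivial reason such as $P$ (or $Q$) already meeting $\sim P$. Here $(\alpha)$ $P \subseteq \varphi(P)$ says exactly $P \cap (\sim P) = \emptyset$, and $(\beta)$ the analogous statement for $Q$; these are what make the ideal $J$ proper to begin with and were implicitly used above when I asserted $F \cap J = \emptyset$ forces $a \wedge b = 0$ rather than an immediate contradiction inside $P$ or inside $Q$. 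I expect the main obstacle to be purely bookkeeping: keeping straight which of the four hypotheses is invoked at which step, since $(\alpha)$--$(\beta)$ guarantee the ideals are proper while $(\gamma)$--$(\delta)$ guarantee the filter generated by $P \cup Q$ stays clear of those ideals. No use of $\rightarrow_w$ or of the decomposition $Ob = Ob_1 \cup Ob_2$ is needed for this proposition; only the De~Morgan law $\sim(a \wedge b) = \sim a \vee \sim b$, the involution property of $\varphi$, and the prime filter extension theorem.
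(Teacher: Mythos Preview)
Your strategy --- form the filter $F$ generated by $P \cup Q$ and the ideal $J$ generated by $(\sim P) \cup (\sim Q)$, show $F \cap J = \emptyset$, and invoke Birkhoff--Stone --- is sound in outline, but the key step has a genuine gap. From $p \wedge q \leq\, \sim p' \vee \sim q' = \sim(p' \wedge q')$ you write ``i.e.\ $p \wedge q \wedge p' \wedge q' = 0$''. That inference would require $b \wedge \sim b = 0$, which is false in any non-Boolean Kleene (hence Nelson) algebra: already in the three-element chain $\{0, a, 1\}$ with $\sim a = a$ one has $a \leq \sim a$ while $a \wedge a = a \neq 0$. So your reduction of ``$F \cap J = \emptyset$'' to the mere properness of $F$ collapses. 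Your concluding claim that ``no use of $\rightarrow_{w}$ \ldots\ is needed'' is in fact wrong: the four-point poset $\{P, Q, \varphi(P), \varphi(Q)\}$ with $P$ and $Q$ each strictly below both $\varphi(P)$ and $\varphi(Q)$, and no other comparabilities, is the Priestley dual of a seven-element Kleene algebra in which $(\alpha)$--$(\delta)$ hold for the corresponding prime filters but no interpolant $M$ exists. The Nelson operation is doing essential work.

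The paper's proof differs from yours at exactly this point. Rather than the lattice filter $F$, it forms the \emph{deductive system} $D(P \cup Q)$ with respect to $\rightarrow_{w}$, argues that it is proper, and extends it to a \emph{prime deductive system} $M$. The containments $M \subseteq \varphi(P)$ and $M \subseteq \varphi(Q)$ are then obtained a posteriori: if some $u \in M \setminus \varphi(P)$, then $\sim u \in P \subseteq M$, hence $u \wedge \sim u \in M$, and Rasiowa's identity $(u\ \wedge \sim u) \rightarrow_{w} t = 1$ together with closure of $M$ under $\rightarrow_{w}$-modus ponens forces $M = A$, a contradiction. This step --- not bookkeeping --- is where the Nelson axioms enter, and it cannot be replaced by De~Morgan or Kleene identities alone.
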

\begin{proof}
In fact, let $P, Q \in Ob$ satisfying the conditions $(\alpha)-(\delta)$ and $X = P \cup Q$. We consider the least deductive system $D = D(X)$ containing $P$ and $Q$. Since $(A, \rightarrow_{w},1)$ is a deductive algebra we have: 

\noindent $D(X) = \{z \in A : (p \wedge q) \rightarrow_{w} z = 1, \text{with}\ p \in P, q \in Q\}$. In particular $p \wedge q \in D(X)$, for all $p \in P, q \in Q$. The deductive system $D(X)$ is proper; in fact, if $0 \in D(X)$ then $(p \wedge q) \rightarrow_{w} 0 = \righthalfcup_{w}(p \wedge q) = 1 \in \varphi(P)$. Hence $(p \wedge q) \wedge \righthalfcup_{w}(p \wedge q) \rightarrow_{w} t =\\
 1 \in \varphi(P)$, for all $t \in A$, a contradiction.
Since $D(X)$ is a deductive system and $0 \not \in D(X)$, then there is an irreducible (= prime) deductive system $M$ such that $D(X) \subseteq M$ (\cite{Ras74}, 4.16., p.96).

If $M \not \subseteq \varphi(P)$ then there is $u \in M$ such that $u \not \in \varphi(P) = - (\sim P)$. So, $\sim u \in P \subseteq M$. Consequently $u\ \wedge \sim u \in M$. Since $(u\ \wedge \sim u) \rightarrow_{w} t = 1 \in M$, for all $t \in A$, we get a contradiction.
By a similar argument we show $M \subseteq \varphi(Q)$. 

Note that in the statement, the prime filter $M$ can be replaced by $\varphi(M)$.

This completes the proof of Proposition \ref{int-pro}.
\end{proof}
%end proposition 3.8
%$\{\text{or}\ \varphi (M)\}$

\

The following crucial characterization has been announced by A. Monteiro (\cite{Mont63a}, p.361).
Let $(A, \wedge, \vee,  \sim, 1)$ be a Kleene algebra such that for each pair $(a, b)$ of elements there is the intuitionistic implication $a \Rightarrow (\sim a \vee b)$. If  we define $a \rightarrow_{w}b = a \Rightarrow (\sim a \vee b)$ then the following conditions are equivalent:
\begin{itemize}
\item[]$(N)$ $(a \wedge b) \rightarrow_{w}c \leq a \rightarrow_{w}(b \rightarrow_{w}c)$
\item[]$(\varphi_{\ms {inter}})$ The Bia{\l}ynicki-Birula and Rasiowa mapping $\varphi$ on $(Ob, \subseteq)$ satisfies the interpolation property. 
\end{itemize}

In view of the result above we need only to prove $(\varphi_{\ms {inter}}) \rightarrow (N)$. 

Using the definition of $\rightarrow_{w}$ the inequality above\\ 
$(a \wedge b) \rightarrow_{w}c \leq a \rightarrow_{w}(b \rightarrow_{w}c) $ can be rewritten as
\begin{center}
$[(a \wedge b) \Rightarrow (\sim a\ \vee \sim b \vee c)] \leq a \Rightarrow [\sim a \vee (b \Rightarrow (\sim b \vee c))]$ 
\end{center}
\noindent By the definition  of the intuitionistic implication  this is equivalent to
\begin{center}
$a \wedge [(a \wedge b) \Rightarrow (\sim a\ \vee \sim b \vee c)] \leq\ \sim a \vee [b \Rightarrow (\sim b \vee c)]$
\end{center}
\noindent and on account of the intuitionistic equalities $(x \wedge y) \Rightarrow z = x \Rightarrow (y \Rightarrow z)$ and $x \wedge (x \Rightarrow y) = x \wedge y$ the last inequality is equivalent to
\begin{center}
$a \wedge [b \Rightarrow (\sim a\ \vee \sim b \vee c)] \leq\ \sim a \vee [b \Rightarrow (\sim b \vee c)]$
\end{center}
If $a \wedge [b \Rightarrow (\sim a\ \vee \sim b \vee c)] \not \leq\ \sim a \vee [b \Rightarrow (\sim b \vee c)]$ then there is a prime filter $P \in Ob$ such that

$\ms{\textbf{(1)}}$ $a \wedge [b \Rightarrow (\sim a\ \vee \sim b \vee c)] \in P$ while

$\ms{\textbf{(2)}}$ $\sim a \vee [b \Rightarrow (\sim b \vee c)] \not \in P$

\smallskip

\noindent From $\ms{\textbf{(1)}}$ we get $\ms{\textbf{(3)}}$ $a \in P$ and $\ms{\textbf{(4)}}$ $b \Rightarrow (\sim a\ \vee \sim b \vee c) \in P$. 

\noindent By $\ms{\textbf{(2)}}$ we infer $\ms{\textbf{(5)}}$ $\sim a \not \in P$ and $\ms{\textbf{(6)}}$ $b \Rightarrow (\sim b \vee c) \not \in P$. From $\ms{\textbf{(6)}}$, and the intuitionistic inequality  $y \leq x \Rightarrow y$  we deduce $\ms{\textbf{(7)}}$ $(\sim b \vee c) \not \in P$ and in turn we get $\ms{\textbf{(8)}}$ $\sim b \not \in P$ and $\ms{\textbf{(9)}}$ $c \not \in P$. 

\noindent According to $\ms{\textbf{(5)}}$, $\ms{\textbf{(8)}}$ and $\ms{\textbf{(9)}}$ we obtain $\ms{\textbf{(10)}}$ $(\sim a\ \vee \sim b \vee c) \not \in P$.

\noindent From $\ms{\textbf{(4)}}$ and $\ms{\textbf{(10)}}$ by modus ponens we infer $\ms{\textbf{(11)}}$ $b \not \in P$.

\smallskip

Let $D(P, b) = \{x \in A : b \rightarrow_{w}x \in P\}$ be the deductive system generated by $P$ and $b$. It is proper since $c \not \in D(P, b)$. In fact, by $\ms{\textbf{(6)}}$, $b \rightarrow_{w}c =\\
 b \Rightarrow (\sim b \vee c) \not \in P$.

\smallskip

Since $A$ is a Kleene algebra, two cases are to be considered: $P \subseteq \varphi(P)$ or $\varphi(P) \subseteq P$.

If $\varphi(P) \subseteq P$ then by $\ms{\textbf{(11)}}$ we get $\sim b \in \varphi(P)$ and thus \\
$(\sim a\ \vee \sim b \vee c) \in \varphi(P) \subseteq P$, which contradicts $\ms{\textbf{(10)}}$. Hence $(\alpha)$ $P \subseteq \varphi(P)$.

\smallskip

Since $D(P, b)$ is a proper deductive system and $c \not \in D(P, b)$, then by (\cite{Ras74}, 4.16., p.96) there is an irreducible (= prime)  deductive system $P_{b}$ such that $D(P, b) \subseteq P_{\ms{b}}$ and $c \not \in P_{\ms{b}}$. Thus $(b \rightarrow_{w}c) \not \in P_{\ms{b}}$. Therefore $(\beta)$ $P \subseteq P_{\ms{b}}$ and\\ $\ms{\textbf{(12)}}$ $c \not \in P_{\ms{b}}$.
From $\ms{\textbf{(3)}}$ and $(\beta)$ we obtain $a \in P \subseteq P_{\ms{b}}$ and by construction $b \in P_{\ms{b}}$. Hence $\ms{\textbf{(13)}}$ $(\sim a\ \vee \sim b) \not \in P_{\ms{b}}$ because $P_{\ms{b}}$ is proper. 
In addition, since $b \in P_{\ms{b}}$ we get $\ms{\textbf{(14)}}$ $\sim b \not \in P_{\ms{b}}$, that is $\ms{\textbf{(15)}}$ $b \in \varphi(P_{\ms{b}})$. 

\smallskip

Concerning the prime filter $P_{\ms{b}}$ two cases are to be considered: $P_{\ms{b}} \subseteq \varphi(P_{\ms{b}})$ or $\varphi(P_b) \subseteq P_{\ms{b}}$.

Assume $P_{\ms{b}} \subseteq  \varphi(P_{\ms{b}})$. From $\ms{\textbf{(4)}}$ we have $b \Rightarrow (\sim a\ \vee \sim b \vee c) \in P \subseteq P_{\ms{b}} \subseteq \varphi(P_{\ms{b}})$ and by $\ms{\textbf{(15)}}$ we infer $(\sim a\ \vee \sim b \vee c) \in P \subseteq P_{\ms{b}} \subseteq \varphi(P_{\ms{b}})$.
By $\ms{\textbf{(3)}}$ we infer $a \in P \subseteq P_{\ms{b}} \subseteq \varphi(P_{\ms{b}})$ so $\sim a \not \in \varphi(P_{\ms{b}})$. 
From $\ms{\textbf{(15)}}$ we get $\sim b \not \in \varphi(P_{\ms{b}})$. Thus $c \in \varphi(P_{\ms{b}})$. Since $P \subseteq P_{\ms{b}} \subseteq \varphi(P_{\ms{b}})$ we have $\varphi(P_{\ms{b}}) \subseteq \varphi(P)$. It means that $c \in \varphi(P)$. In other words, $\sim c \not \in \varphi(P)$, $\sim c \in\ \sim P$ and $c \in P$, which contradicts $\ms{\textbf{(9)}}$. Hence $(\gamma)$ $\varphi(P_{\ms{b}}) \subseteq P_{\ms{b}}$.

\smallskip
 
\noindent In short,

$(\alpha)$ $P \subseteq \varphi(P)$, \quad  $(\gamma)$ $\varphi(P_{\ms{b}}) \subseteq P_{\ms{b}}$, \quad $(\beta)$ $P \subseteq P_{\ms{b}}$, \quad $\{\, (\delta)$ $\varphi(P_{\ms{b}}) \subseteq \varphi(P)\, \}$
 
\noindent By the interpolation property $(\varphi_{\ms {inter}})$ there is a prime filter $M$ such that:
 
\qquad \qquad $P \subseteq M$, \quad $\varphi(P_{\ms{b}}) \subseteq M$, \quad $M \subseteq \varphi(P)$, \quad and \quad $M \subseteq P_{\ms{b}}$.

\smallskip

From $\ms{\textbf{(4)}}$ we have $b \Rightarrow (\sim a\ \vee \sim b \vee c) \in P \subseteq M$. By $\ms{\textbf{(15)}}$ and $\varphi(P_{\ms{b}}) \subseteq M$ we get $(\sim a\ \vee \sim b \vee c) \in M$.

From $\ms{\textbf{(3)}}$ and $(\alpha)$ we deduce $\sim a \not \in \varphi(P)$. This fact and $\ms{\textbf{(14)}}$ entail $\sim a \not \in M$ and $\sim b \not \in M$. Hence $c \in M \subseteq P_{\ms{b}}$ which contradicts $\ms{\textbf{(12)}}$.

This completes the proof of the statement $(\varphi_{\ms {inter}}) \rightarrow (N)$.

 %proposition 3.9
\begin{propo}\label {HB-Nelson}
If the involution $\varphi$ of $Ob$ satisfies the interpolation property then, the Kleene symmetrical Heyting algebra of sets $({\cal O}_{R}, \cap, \cup, \rightarrow_{w}, \sim, Ob)$ where, for $G, H \in {\cal O}_{R}$, $G\rightarrow_{w} H = G \Rightarrow (\sim G \cup H)$, is a Nelson algebra. 
\end{propo}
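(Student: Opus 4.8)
The plan is to verify directly that $({\cal O}_{R}, \cap, \cup, \rightarrow_{w}, \sim, Ob)$ satisfies the four Nelson axioms (N0)--(N3) of Definition \ref{Nelson}, using the structural facts already assembled in part B). Axioms (N0), (N1) and (N2) require no new work: (N0) is precisely the statement that $({\cal O}_{R}, \cap, \cup, \sim, \emptyset, Ob)$ is a Kleene symmetrical Heyting algebra, which was established in Proposition \ref{arrow-W}(M0); and (M1), (M2) of that same proposition are literally the instances (N1) and (N2) for the operation $G \rightarrow_{w} H = G \Rightarrow (\sim G \cup H)$. So the entire content of the proposition is axiom (N3), the equality $G \rightarrow_{w}(H \rightarrow_{w}K) = (G \cap H) \rightarrow_{w}K$.

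For (N3) I would split the equality into its two inclusions. One inclusion, $G \rightarrow_{w}(H \rightarrow_{w}K) \subseteq (G \cap H)\rightarrow_{w}K$, is exactly (M3) of Proposition \ref{arrow-W} and is therefore already available in ${\cal O}_{R}$ with no hypothesis on $\varphi$. Hence the whole burden falls on the reverse inclusion $(G \cap H)\rightarrow_{w}K \subseteq G \rightarrow_{w}(H \rightarrow_{w}K)$, and this is where the interpolation hypothesis $(\varphi_{\ms{inter}})$ enters. The key observation is that, under the translation $h$ (identifying elements of the Kleene symmetrical Heyting algebra with sets in ${\cal O}_{R}$) and the definition $a \rightarrow_{w} b = a \Rightarrow (\sim a \vee b)$, this missing inclusion is precisely the condition $(N)$: $(a\wedge b)\rightarrow_{w} c \leq a \rightarrow_{w}(b\rightarrow_{w} c)$. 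The excerpt has just proved the implication $(\varphi_{\ms{inter}}) \rightarrow (N)$ at the level of an abstract Kleene algebra admitting the relevant intuitionistic implications; since $({\cal O}_{R}, \cap, \cup, \Rightarrow, \sim, \emptyset, Ob)$ is such an algebra and its Bia{\l}ynicki-Birula--Rasiowa involution $\varphi$ on $(Ob, \subseteq)$ is assumed to have the interpolation property, applying that implication to ${\cal O}_{R}$ yields $(N)$, i.e.\ the reverse inclusion. Combining with (M3) gives the equality (N3).

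I would therefore organize the proof as: first cite (M0) for (N0); then cite (M1) and (M2) for (N1) and (N2); then, for (N3), cite (M3) for the ``$\subseteq$'' direction and invoke the implication $(\varphi_{\ms{inter}})\rightarrow(N)$ just established, applied to the Kleene symmetrical Heyting algebra ${\cal O}_{R}$ whose associated involution satisfies $(\varphi_{\ms{inter}})$ by hypothesis, for the ``$\supseteq$'' direction. A small point worth checking explicitly is that ${\cal O}_{R}$ genuinely possesses, for every pair $G, H$, the intuitionistic implication $G \Rightarrow (\sim G \cup H)$ needed to run that argument --- but this is immediate since ${\cal O}_{R}$ was shown to be a Heyting (indeed $H$-$B$-) algebra, so $\Rightarrow$ is everywhere defined. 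The main obstacle is essentially bookkeeping: making sure the abstract equivalence proved for Kleene algebras with intuitionistic implication applies verbatim to the concrete lattice ${\cal O}_{R}$, and that the involution used there is the same $\varphi$ (defined via $\sim X = -\varphi(X)$) to which the interpolation hypothesis refers; once that identification is spelled out, all four axioms follow and the proposition is proved.
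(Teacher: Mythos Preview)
There is a genuine gap in your reduction to the already-proved implication $(\varphi_{\ms{inter}})\rightarrow(N)$. That implication, as stated and proved just before the proposition, concerns an abstract Kleene algebra $A$ (with the relevant Heyting implications) and the Bia{\l}ynicki-Birula--Rasiowa involution on \emph{the set of prime filters of $A$}. To apply it with $A={\cal O}_{R}$ you would need interpolation for the BB--R involution on the prime spectrum of ${\cal O}_{R}$. But the hypothesis of Proposition~\ref{HB-Nelson} is interpolation for the given $\varphi$ on the original $Ob$, and $Ob$ is \emph{not} the prime spectrum of ${\cal O}_{R}$; it is merely the underlying point-set over which the open sets live. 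Your sentence ``its Bia{\l}ynicki-Birula--Rasiowa involution $\varphi$ on $(Ob,\subseteq)$'' conflates these two objects. Nothing in the paper identifies points of $Ob$ with prime filters of ${\cal O}_{R}$, nor transports interpolation from one to the other, so the invocation of $(\varphi_{\ms{inter}})\rightarrow(N)$ is not licensed.

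This is exactly why the paper does not argue as you suggest. Its proof of Proposition~\ref{HB-Nelson} re-runs the argument \emph{at the level of points of $Ob$}: it takes $P\in Ob$ lying in the left-hand set, performs a case analysis on $P$, $\varphi(P)$ and auxiliary prime filters $U,V,P_{\ms v},W$ in $Ob$, and applies the interpolation property directly to these elements of $Ob$ (together with deductive systems in the underlying algebra) to reach a contradiction. The point-level argument is what makes the hypothesis on $\varphi$ over $Ob$ usable; your short-cut would require an additional (and non-trivial) lemma relating $(Ob,\varphi)$ to the prime spectrum of ${\cal O}_{R}$, which you have not supplied. Your treatment of (N0)--(N2) and of the ``$\subseteq$'' half of (N3) via (M0)--(M3) is fine; the problem is solely the ``$\supseteq$'' half.
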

\begin{proof}
On account of Proposition \ref{arrow-W} we need to show the following inequality 
\[(G \cap H) \rightarrow_{w} K \subseteq G \rightarrow_{w} (H \rightarrow_{w} K)
\]
Using the definition of $\rightarrow_{w}$ this inequality can be rewritten as
\[
(G \cap H) \Rightarrow (\sim G\ \cup \sim H \cup K) \subseteq G \Rightarrow (\sim G \cup (H \Rightarrow (\sim H \cup K)))
\]
By the definition of the intuitionistic implication this is equivalent to 
\[
G \cap [(G \cap H) \Rightarrow (\sim G\ \cup \sim H \cup K)] \subseteq\ \sim G \cup (H \Rightarrow (\sim H \cup K))
\]
and on account of the intuitionistic equalities $(x \wedge y) \Rightarrow  z = x \Rightarrow  (y \Rightarrow  z)$ and $x \wedge (x \Rightarrow  y) = x \wedge y$ the preceding inequality is equivalent to
\[
G \cap (H \Rightarrow (\sim G\ \cup \sim H \cup K)) \subseteq\ \sim G \cup (H \Rightarrow  (\sim H \cup K))
\]

In other words, we need to show that the set below reported by $(A)$ is included in that indicated by $(B)$:
\vspace{-0.2cm}
\begin{flalign*}
(A) \quad &G \cap (H \Rightarrow (\sim G\ \cup \sim H \cup K)) =
G \cap I_{\ms R}(I_{\ms R}H \supset I_{\ms R}(\sim G\ \cup \sim H \cup K)) =\\
&G \cap I_{\ms R}(- H\ \cup \sim G\ \cup \sim H \cup K) =\\
&G \cap \bigcup \{R(Q) : R(Q) \subseteq - H\ \cup \sim G\ \cup \sim H \cup K\}&
\end{flalign*}
%\vspace{-0.5cm}
\begin{flalign*}
(B) \quad &\sim G \cup (H \Rightarrow  (\sim H \cup K)) =\ \sim G \cup I_{\ms R}(I_{\ms R}H \supset I_{\ms R}(\sim H \cup K)) =\\
&\sim G \cup I_{\ms R}(- H\ \cup \sim H \cup K)) =\ \sim G \cup \bigcup \{R(Q) : R(Q) \subseteq - H\ \cup \sim H \cup K\}&
\end{flalign*}

\smallskip

Assume there is a prime filter $P \in Ob$ such that $P \in (A)$. That is $\ms{\textbf{(1)}}$ $P \in G$ and $\ms{\textbf{(2)}}$ $P \in \bigcup \{R(Q) : R(Q) \subseteq - H\ \cup \sim G\ \cup \sim H \cup K\}$. 

\smallskip

If $\varphi(P) \not \in G$ then $P \not \in \varphi(G)$; that is $P \in -\varphi(G) =\ \sim G \in (B)$. The result is obtained. 

We consider now the case: $\varphi(P) \in G$. Thus $P$ and $\varphi(P)$ are in $G$. 
We remark that, in this particular situation, the place of $P$ and $\varphi(P)$ are interchangeable.
We can suppose for example that $(\alpha)$ $P \subseteq \varphi(P)$. 
 %Taking in account the place of this two prime filters in reference to others, we can exchange the name of them to obtain still a contradiction.

By $\ms{\textbf{(2)}}$ let $U \in Ob$ be a prime filter such that $P \in R(U)$. If
 
\noindent $R(U) \subseteq -H\ \cup \sim H \cup K$, then the result follows.

Otherwise $R(U)\ \cap \sim G \not \equal \emptyset$. Let $V \in Ob$ be a prime filter such that $V \in R(U)$ and $V \in\ \sim G = -\varphi(G)$ so $V \not \in \varphi(G)$, i.e.\ $\ms{\textbf{(3)}}$ $\varphi(V) \not \in G$. 

\smallskip

We note that $U \subseteq P$, $U \subseteq V$, $\varphi(P) \in G$ and $V \not \subseteq P$ (if $V \subseteq P$ then \\ $\varphi(P) \subseteq \varphi(V) \in G$ a contradiction). Also, $P \subseteq \varphi(P) \subseteq \varphi(U)$ and $\varphi(V) \subseteq \varphi(U)$.

Two cases are to be considered: $V \subseteq \varphi(V)$ or $\varphi(V) \subseteq V$.

\

\noindent Case I. $V \subseteq \varphi(V)$. Let $D = D(P \cup \varphi(V))$ be the deductive system generated by $P \cup \varphi(V)$. We have that $D$ is proper because $D \subseteq \varphi(U)$.

From (\cite{Ras74}, 4.16., p.96) there is an irreducible (= prime) deductive system $P_{v}$ such that $(\gamma)$ $P \subseteq (P \cup \varphi(V)) \subseteq D \subseteq P_{\ms{v}}$. Since $\varphi(V) \subseteq P_{\ms{v}}$ we infer\\ 
$\varphi(P_{\ms{v}}) \subseteq V \subseteq \varphi(V) \subseteq P_{\ms{v}}$, i.e. $(\beta)$ 
$\varphi(P_{\ms{v}}) \subseteq P_{\ms{v}}$.

\noindent In summary,

$(\alpha)$ $P \subseteq \varphi(P)$, \quad $(\beta)$ $\varphi(P_{\ms{v}}) \subseteq P_{\ms{v}}$, \quad $(\gamma)$ $ P \subseteq P_{\ms{v}}$, \quad $\{\, (\delta)$ $\varphi(P_{\ms{v}}) \subseteq \varphi(P)\, \}$.

\noindent By the interpolation property $(\varphi_{\ms {inter}})$ there is a prime filter $W$ (id.\ $\varphi(W))$ such that:

\qquad \qquad $P \subseteq W$, \quad $\varphi(P_{\ms{v}}) \subseteq W$, \quad $W  \subseteq \varphi(P)$, \quad and \quad $W \subseteq P_{\ms{v}}$.

\medskip

From here, we deduce that $\ms{\textbf{(4)}}$ $\varphi(V) \subseteq W$. Indeed, let $v \in \varphi(V) \subseteq D \subseteq P_{\ms{v}}$, so $\sim v \not \in P_{\ms{v}}$ since $P_{\ms{v}}$ is proper; thus $v \not \in\ \sim P_{\ms{v}}$, i.e.\ $v \in \varphi(P_{\ms{v}}) \subseteq W$.

In addition we note that $W \in G$, as $P \subseteq W$ and $P \in G$. Also, $W \subseteq \varphi(P)$ implies $P \subseteq \varphi(W)$ and $\varphi(W) \in G$.

From $\ms{\textbf{(3)}}$ we deduce that $\varphi(P_{\ms{v}}) \not \in G$. In fact, since $\varphi(V) \subseteq P_{\ms{v}}$ then\\ $\varphi(P_{\ms{v}}) \subseteq V \subseteq \varphi(V) \not \in G$.

\medskip

Concerning the prime filter $W$, two cases are to be considered: $W \subseteq \varphi(W)$ or $\varphi(W) \subseteq W$.

Assume $W \subseteq \varphi(W)$. Since by $\ms{\textbf{(4)}}$ $V \subseteq \varphi(V) \subseteq W$, we get $\varphi(W) \subseteq  \varphi(V)$ so $\varphi(V) \in G$, in contradiction with $\ms{\textbf{(3)}}$.

We remark that, in this particular situation, the places of $W$ and $\varphi(W)$ are interchangeable, so the case $\varphi(W) \subseteq W$ leads also to a contradiction.
%Taking in account the place of this two prime filters in reference to others, we can exchange the name of them to obtain still a contradiction.

\

\noindent Case II. $\varphi(V) \subseteq V$. Let $D = D(P \cup V)$ be the deductive system generated by $P \cup V$, that is $D = D(P \cup V) = \{z \in A : (p \wedge v) \rightarrow_{w}  z = 1, \text{with}\ p \in P, v \in V \}$. We prove that $D$ is proper. In fact if $0 \in D$ there are $p \in P, v \in V$ such that $(p \wedge v) \rightarrow_{w} 0 = 1$. Hence $(p \rightarrow_{w}(v \rightarrow_{w} 0)) = (p \rightarrow_{w} \righthalfcup_{w} v) = 1 \in P$. Since $p \in P$ we deduce $\righthalfcup_{w}\, v = v \rightarrow_{w} 0 =
v \Rightarrow (\sim v \vee 0) = (v \Rightarrow\ \sim v) \in P$. If $v \in P$ then $\sim v \in P$, a contradiction. Hence $v \not \in P$, for all $v \in V$, that is $P \cap V = \emptyset$, a contradiction because $1 \in P \cap V$.

\medskip

From (\cite{Ras74}, 4.16., p.96) there is an irreducible (= prime) deductive system $P_{v}$ such that $(\gamma)$ $P \subseteq (P \cup V) \subseteq D \subseteq P_{\ms{v}}$. Since $\varphi(V) \subseteq V\subseteq P_{\ms{v}}$ we infer\\
 $\varphi(P_{\ms{v}}) \subseteq V \subseteq P_{\ms{v}}$, i.e. $(\beta)$ $\varphi(P_{\ms{v}}) \subseteq P_{\ms{v}}$.

\noindent In summary,

$(\alpha)$ $P \subseteq \varphi(P)$, \quad $(\beta)$ $\varphi(P_{\ms{v}}) \subseteq P_{\ms{v}}$, \quad $(\gamma)$ $ P \subseteq P_{\ms{v}}$, \quad $\{\, (\delta)$ $\varphi(P_{\ms{v}}) \subseteq \varphi(P)\, \}$.

\noindent By the interpolation property $(\varphi_{\ms {inter}})$ there is a prime filter $W$ (id.\ $\varphi(W))$ such that:

\qquad \qquad $P \subseteq W$, \quad $\varphi(P_{\ms{v}}) \subseteq W$, \quad $W  \subseteq \varphi(P)$, \quad and \quad $W \subseteq P_{\ms{v}}$.

\medskip

\noindent The rest of the proof is similar to the previous one.

This completes the proof of Proposition \ref{HB-Nelson} 
\end{proof}
%end proposition 3.9

\

To achieve our project, we will prove a representation theorem for Nelson algebras. 
For other representation theorems concerning this type of algebras, see \cite{Ras74} and \cite{Vak77}.

%theorem 3.10
\begin{theo}\label{repr-Nelson}
For every Nelson algebra $A$, there exists an isomorphism $h$ from $A$ into the Nelson algebra of sets ${\cal O}_{R}$ of the `concrete' Rauszer algebra derived from $A$ and enriched with an abstract binary operation.
\end{theo}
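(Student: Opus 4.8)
\textbf{Proof plan for Theorem \ref{repr-Nelson}.}
The plan is to reuse the machinery already assembled: take $Ob$ to be the set of prime filters of $A$ ordered by $\subseteq$ (so $R$ is $\subseteq$), let $\varphi$ be the Bia{\l}ynicki-Birula--Rasiowa involution, and define $h(x) = \{P \in Ob : x \in P\}$ exactly as in Theorems \ref{repr-H-B} and \ref{repr-SH}. Since a Nelson algebra is built on a Kleene algebra $(A,\wedge,\vee,\sim,1)$, the results in part \textbf{B)} show that each $h(x)$ lies in ${\cal O}_R$ and that $\sim h(x) \in {\cal O}_R$; the Kleene condition $(K_{a,b})$ transfers to condition $(K)$ on $\varphi$, so $({\cal O}_R,\cap,\cup,-,\Rightarrow,\sim,\emptyset,Ob)$ is a Kleene symmetrical Heyting algebra. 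The facts $(h0)$--$(h2)$ already give that $h$ is injective, increasing, preserves $\wedge,\vee$, sends $1$ to $Ob$, and (from Theorem \ref{repr-SH}) satisfies $h(\sim x) = \sim h(x)$.

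Next I would verify that the weak implication is preserved, i.e.\ $h(a \rightarrow_w b) = h(a) \rightarrow_w h(b)$, where on ${\cal O}_R$ we use $G \rightarrow_w H = G \Rightarrow (\sim G \cup H)$. Because $a \rightarrow_w b = a \Rightarrow (\sim a \vee b)$ holds in the Kleene algebra $A$ (this is the defining identity $(N2)$ combined with the intuitionistic equality $x \wedge (x \Rightarrow y) = x \wedge y$, or simply the characterization recalled before Proposition \ref{int-pro}), we have $h(a \rightarrow_w b) = h(a \Rightarrow (\sim a \vee b)) = h(a) \Rightarrow (\sim h(a) \cup h(b)) = h(a) \rightarrow_w h(b)$, using $(h3)$ (preservation of $\Rightarrow$), $(h1)$ and $h(\sim a) = \sim h(a)$. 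The weak negation $\righthalfcup_w a = a \rightarrow_w 0$ is then handled automatically, since $h(0) = \emptyset$. Thus $h$ is an injective homomorphism of the underlying Kleene algebras that also preserves $\rightarrow_w$.

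It then remains to know that the target $({\cal O}_R,\cap,\cup,\rightarrow_w,\sim,Ob)$ is genuinely a Nelson algebra, i.e.\ that $(N0)$--$(N3)$ hold for it. Conditions $(N0)$, $(N1)$, $(N2)$ are Proposition \ref{arrow-W}, parts $(M0)$--$(M2)$ together with the remark that $({\cal O}_R,\dots)$ is Kleene symmetrical Heyting. The missing piece $(N3)$ --- equivalently the inequality $(G\cap H)\rightarrow_w K \subseteq G \rightarrow_w (H \rightarrow_w K)$ --- is exactly the content of Proposition \ref{HB-Nelson}, whose hypothesis is that $\varphi$ has the interpolation property. So I would invoke Proposition \ref{int-pro}: in a Nelson algebra the involution $\varphi$ on $Ob$ does satisfy $(\varphi_{\ms{inter}})$, hence Proposition \ref{HB-Nelson} applies and ${\cal O}_R$ is a Nelson algebra. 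Assembling: $h$ embeds $A$ isomorphically onto a subalgebra of the Nelson algebra of $R$-open sets.

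\textbf{Main obstacle.} The genuinely hard work is not in this theorem itself but in the two propositions it rests on, and the real subtlety to watch is the interaction between \emph{prime filters} and \emph{prime deductive systems}: on a Nelson algebra these two notions do not coincide, $Ob = Ob_1 \cup Ob_2$, and the proof that $h(x) \in {\cal O}_R$ plus preservation of $\rightarrow_w$ must be checked against \emph{all} prime filters, not just the special ones of the first kind. Provided Propositions \ref{int-pro} and \ref{HB-Nelson} are in hand, the proof of Theorem \ref{repr-Nelson} is a short bookkeeping argument; the only point needing genuine care is confirming that the defining identity $a \rightarrow_w b = a \Rightarrow (\sim a \vee b)$ is available in $A$ so that $(h3)$ can be applied, which is legitimate because $(N2)$ forces it. I would therefore present the proof as: recall $h$ and $(h0)$--$(h2)$; note $h(\sim x) = \sim h(x)$ from Theorem \ref{repr-SH}; derive $h(a \rightarrow_w b) = h(a) \rightarrow_w h(b)$ from $(h3)$; conclude that ${\cal O}_R$ is a Nelson algebra by Propositions \ref{int-pro} and \ref{HB-Nelson}; and finally state that $h$ is the desired isomorphism onto its image.
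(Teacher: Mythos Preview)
Your plan has a genuine gap at exactly the step you flag as ``the only point needing genuine care'': the appeal to $(h3)$. Condition $(h3)$ in Theorem~\ref{repr-H-B} is proved for $H$--$B$-algebras, where the Heyting implication $a \Rightarrow c$ exists for \emph{every} pair $(a,c)$; its proof uses the filter $F(Q,a)=\{u : a\Rightarrow u\in Q\}$, which presupposes that structure. A Nelson algebra, as defined in Definition~\ref{Nelson}, is \emph{not} a Heyting algebra in general (for instance, the twist-structure construction over a Heyting algebra $H$ fails to be relatively pseudocomplemented unless $H$ is also co-Heyting). So you cannot simply write $h(a\Rightarrow(\sim a\vee b))=h(a)\Rightarrow h(\sim a\vee b)$ by citing $(h3)$. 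Moreover, the justification ``$(N2)$ forces $a\rightarrow_w b = a\Rightarrow(\sim a\vee b)$'' is only half the story: $(N2)$ gives $a\wedge(a\rightarrow_w b)\leq \sim a\vee b$, but the \emph{maximality} of $a\rightarrow_w b$ among elements $x$ with $a\wedge x\leq \sim a\vee b$ is a separate, nontrivial fact about Nelson algebras that you would need to establish and then exploit by reworking the $(h3)$ argument for this single relative pseudocomplement.

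The paper avoids this entirely. Its proof of $h(a\rightarrow_w b)=h(a)\rightarrow_w h(b)$ is direct and does not pass through $(h3)$: for $(\rightarrow)$ it uses $(N2)$ and the filter axioms; for $(\leftarrow)$ it takes a prime filter $P$ with $a\notin P$, $\sim a\notin P$, $b\notin P$ and builds the \emph{deductive system} $D$ generated by $P$ and $a$, splitting into the cases $P\in Ob_1$ (where $D=\{u: a\rightarrow_w u\in P\}$) and $P\in Ob_2$ (where $D=\{u:\sim u\rightarrow_w \sim(a\wedge p)=1\}$ and one derives a contradiction from $\sim a\in\varphi(P)\subseteq P$). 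This is precisely the $Ob_1/Ob_2$ subtlety you mention as the ``main obstacle'', and the paper confronts it head-on rather than routing around it. Your treatment of the codomain---invoking Propositions~\ref{int-pro} and~\ref{HB-Nelson} to conclude that ${\cal O}_R$ is a Nelson algebra---is correct and matches the paper's set-up; but the preservation of $\rightarrow_w$ itself needs the direct argument, not a reduction to $(h3)$.
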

\begin{proof} 
Since $(A, \wedge, \vee, \sim, 1)$ is a Kleene algebra, we infer that the conditions $h(0)-h(3)$ in the proof of Theorem \ref{repr-H-B} and the equality $h(\sim a) =\ \sim h(a)$ in Theorem \ref{repr-SH} are satisfied. 
Also $h(K_{a,b}) = K_{h(a),h(b)}$.

\medskip

Now we will prove that $h(a \rightarrow_{w}b) = h(a) \rightarrow_{w}h(b)$, where $h(a) \rightarrow_{w}h(b) = h(a) \Rightarrow (\sim h(a) \cup h(b))$.

\medskip

\noindent First we remark that 

$h(a) \rightarrow_{w}h(b) = h(a) \Rightarrow (\sim h(a) \cup h(b)) = I_{\ms R}(h(a) \supset (\sim h(a) \cup h(b))) = \\
I_{\ms R}(-h(a)\ \cup \sim h(a) \cup h(b)) = \bigcup \{R(P) : R(P) \subseteq -h(a) \cup h(\sim a) \cup h(b)\}$.

\

$(\rightarrow)$\ Let $P \in h(a \rightarrow_{w}b), i.e.\ a \rightarrow_{w}b \in P$. 

We know that $P \in R(P)$. Let $Q \in R(P)$, i.e.\ $P\subseteq Q \in Ob$. We will prove that $Q \in -h(a) \cup h(\sim a) \cup h(b)$. 

If $Q \not \in -h(a)$, i.e.\ $a \in Q$, we have $a, a \rightarrow_{w}b \in Q$. Since $Q$ is a filter we infer, by $(f_2)$ and $(N2)$ in Definition \ref{Nelson}  that $a \wedge (a \rightarrow_{w}b) = a \wedge (\sim a \vee b) \in Q$. Hence, by $(f_3)$, $\sim a \vee b \in Q$, that is $Q \in h( \sim a \cup b) =\ h(\sim a) \cup h(b) \subseteq -h(a) \cup  h( \sim a) \cup h(b)$.

\

$(\leftarrow)$\ Let $P \in h(a) \rightarrow_{w}h(b)$, then there is a prime filter $P_{0} \in Ob_{1} \cup Ob_{2}$ such that $P\in R(P_{0})$ with $\ms{\textbf{(1)}}$ $R(P_{0}) \subseteq -h(a) \cup h(\sim a) \cup h(b)$. We will show that $P \in h(a \rightarrow_{w}b)$.

\medskip

Assume $b \in P$ or $\sim a \in P$. Since both $\sim a$ and $b$ are $\leq\ \sim a \vee b \leq a \rightarrow_{w}b$ (\cite{Bri-Mont67}, $(N8)$, p.281)  and $P$ is a filter, we deduce, by $(f_3)$, that $a \rightarrow_{w}b \in P$, that is $P \in h(a \rightarrow_{w}b)$.

\medskip

If $b \not \in P$ and $\ms{\textbf{(2)}}$ $\sim a \not \in P$, then by $\ms{\textbf{(1)}}$, $P \in -h(a)$, i.e.\ $a \not \in P$. Let $D$ be the deductive system generated by $P$ and $a$.

On account of (\cite{Ras74}, (4.8), p.94) we consider two cases:

\noindent If $P \in Ob_1$ then $D = \{u \in A : (a \wedge p) \rightarrow_{w}u = 1, \text{with}\ p \in P\}$ (which is equivalent to: $D = \{u \in A : a \rightarrow_{w} u \in P\}$ (\cite{Mont63b}, p.5)). 

\noindent If $P \in Ob_2$ we have $D = \{u :\ \sim u \rightarrow_{w}\ \sim (a \wedge p) = 1, \text{with}\ p \in P\}$ (\cite{Ras74}, p.94).

\

Assume $D = \{u \in A : a \rightarrow_{w} u \in P\}$.     
If $b \in D$ we get $a \rightarrow_{w} b \in P$ so $P \in h(a \rightarrow_{w}b)$.

We willl show that the case $b \not \in D$ leads to a contradiction. In fact, if $b \not \in D$ then $D$ is proper. Therefore, there is an irreducible (= prime) deductive system $C$ such that $P \subseteq D \subseteq C$ and $b \not \in C$. 

Suppose $a \rightarrow_{w}b \not \in C$. Since $\sim a \leq (a \rightarrow_{w}b)$ (\cite{Ras74}, (34), p.70) and $b \leq (a  \rightarrow_{w}b)$ (\cite{Bri-Mont67}, $(N8)$, p.281), we get $\sim a \not \in C$ and $b \not \in C$. Hence $P_{0} \subseteq C$, $a \in D \subseteq C$, $\sim a \not \in C$, and $b \not \in C$. This means that the prime filter $C \in R(P_{0})$ and $C \not \in -h(a) \cup h(\sim a) \cup h(b)$, a contradiction. Hence $a \rightarrow_{w}b \in C$, i.e.\ $a \wedge (a \rightarrow_{w}b) = a \wedge (\sim a \vee b) \in C$. This implies that $\sim a \vee b \in C$, again a contradiction because $C$ is prime, $\sim a \not \in C$ and $b \not \in C$.

%If $b \in D$ we get $a \rightarrow_{w} b \in P$ so $P \in h(a \rightarrow_{w}b)$.

%Hence $a \rightarrow_{w}b \in C$, that is $C \in h(a \rightarrow_{w}b)$.

\medskip

Suppose now that $D = \{u :\ \sim u \rightarrow_{w}\ \sim (a \wedge p) = 1, \text{with}\ p \in P\}$. Since $a \not \in P$ we deduce $\sim a \not \in\ \sim P$. Thus $\sim a \in -(\sim P) = \varphi(P) \subseteq P$ since $P \in Ob_{2}$. Hence, $\sim a \in P$, in contradiction with the assumption $\ms{\textbf{(2)}}$ ($\sim a \not \in P$).

\medskip

This completes the proof of the theorem \ref{repr-Nelson}.
\end{proof}
%end theorem 3.10

%\section{6.Conclusion}
\section{\hspace{-2.6ex}.{\hspace{1.ex}}{Conclusion}}

Motivated by an applied problem --the approximation of a set by a pair of sets, called the lower and upper approximation-- and inspired by the Halmos-Monteiro research about monadic Boolean algebras, we have pointed out the `concrete' Rauszer Boolean algebra defined via a preorder $R$ and enriched with an abstract binary operation. 

The $H$-$B$ algebra of $R$-open or $R$-closed sets of this `concrete' algebra gives a general framework to embed some known structures in a unifying way and in a simple and complete presentation. For the sake of illustration, we have considered the representations of $H$-$B$-algebras as well as ${\L}$ukasiewicz, De Morgan, symmetrical Heyting algebras and Nelson ones.

\newpage

%\begin{thebibliography}{xx}

\noindent \small{Villeurbanne, France, January 2014}
\[
----------0----------
\]
\end{document}